\documentclass[11pt,a4paper,reqno]{amsart}
\usepackage{amsfonts}
\usepackage{amsmath}
\usepackage{amssymb}
\usepackage{hyperref}

\usepackage{amsthm}
\usepackage{enumitem}

\usepackage{dutchcal}

\newcommand{\cE}{\mathcal E}

\newcommand{\N}{\mathbb N}
\newcommand{\1}{\mathbf 1}
\newcommand{\dd}{\mathrm d}

\newcommand{\eps}{\varepsilon}
\numberwithin{equation}{section}
\newtheorem{theorem}{Theorem}[section]
\newtheorem{lemma}[theorem]{Lemma}
\newtheorem{remark}[theorem]{Remark}
\newtheorem{corollary}[theorem]{Corollary}
\newtheorem{proposition}[theorem]{Proposition}

\allowdisplaybreaks

\def\enddoc{\end{document}}

\begin{document}
\author{Qingsong Gu}
\address{Department of mathematics, Nanjing University, Nanjing 210093, P. R. China} \email{qingsonggu@nju.edu.cn}

\author{Lu Hao}
\address{Universit\"{a}t Bielefeld, Fakult\"{a}t f\"{u}r Mathematik, Postfach 100131, D-33501, Bielefeld, Germany}
\email{lhao@math.uni-bielefeld.de}

\author{Xueping Huang}
\address{School of Mathematics and Statistics, Nanjing University of Information Science and Technology,
	Nanjing 210044, P. R. China}
\email{hxp@nuist.edu.cn}

\author{Yuhua Sun}
\address{School of Mathematical Sciences and LPMC, Nankai University, 300071
	Tianjin, P. R. China}
\email{sunyuhua@nankai.edu.cn}
\title[Lane--Emden system]
{A sharp integral criterion for the Lane--Emden system of inequalities
on weighted graphs}

\thanks{\noindent
	Q. Gu was supported by the National Natural Science Foundation of China (Grant Nos. 12101303 and 12171354).
	L. Hao was funded by the Deutsche Forschungsgemeinschaft (DFG, German Research Foundation), Project-ID 317210226, SFB 1283.
	X. Huang was supported by
	the National Natural Science Foundation of China (Grant No. 11601238).
	Y. Sun was funded by the National Natural Science Foundation of
	China (Grant No. 12371206) and the Fundamental Research Funds for the Central Universities, No. 050-63263078.}

\begin{abstract}
	We establish a sharp integral nonexistence criterion for the Lane--Emden
	system of inequalities
	\[
	-\Delta u\ge v^p,\qquad -\Delta v\ge u^q,
	\qquad p,q>0,\quad pq>1,
	\]
	on arbitrary infinite, connected, locally finite weighted graphs.  In the
	asymmetric case $p\ne q$, set $P=\max\{p,q\}$.  If, for some root $o\in V$,
	\[
	\sum_{n=2}^{\infty}
	\frac{n^{2pq+2P-1}}{\mu(B(o,n))^{pq-1}}=\infty,
	\]
	then every nonnegative solution $(u,v)$ satisfies $u\equiv v\equiv0$.
	The proof combines flow decomposition of the finite Green current with
	nonlinear testing.  In the symmetric case $p=q>1$, the Liouville problem
	reduces, via the sum $u+v$, to the scalar criterion
	\[
	\sum_{n=2}^{\infty}
	\frac{n^{2p-1}}{\mu(B(o,n))^{p-1}}=\infty.
	\]
	Weighted half-line examples show that the critical logarithmic endpoint
	in the asymmetric result is sharp.
\end{abstract}

\subjclass[2020]{Primary 35J92, 35R02; Secondary 31C20}
\keywords{weighted graph, Lane--Emden system, flow decomposition, volume growth, Liouville theorem}

\maketitle

\section{Introduction}

Liouville-type theorems for nonlinear elliptic equations and inequalities
reflect the interaction between the nonlinearity and the geometry of the
underlying space.  Fundamental examples are the Lane--Emden equation
\[
-\Delta u=u^q,\qquad q>1,
\]
and its coupled counterpart
\[
-\Delta u=v^p,\qquad -\Delta v=u^q,
\qquad p,q>0,\quad pq>1.
\]
Although the scalar problem has been studied extensively, the coupled
system is substantially more delicate because the two components interact
and may have different scaling orders.  Even in Euclidean space, optimal
exponents and borderline phenomena for Lane--Emden systems require more
refined arguments than in the scalar case; see
\cite{BuscaManasevich02,GidasSpruck81,Mitidieri96,MitidieriPokhozhaev01,
	PQS07,SerrinZou96,Souplet09}.

A central principle in geometric analysis is that Euclidean dimension can
be replaced by geometric information at infinity.  On complete noncompact
Riemannian manifolds, volume growth plays an essential role in parabolicity,
stochastic completeness, and Liouville properties.  Classical criteria
connect the growth of geodesic balls with recurrence of Brownian motion and
stochastic completeness; see
\cite{GrigorYannParabolic,GrigorYannStoch}.  This viewpoint has led to many
Liouville-type results for nonlinear elliptic inequalities on manifolds, in
which volume growth replaces dimension; see
\cite{GS14,GSV20,Sun16,XuWangSun18}.

The same philosophy extends naturally to weighted graphs.  Let
$(V,E,\mu)$ be an infinite, connected, locally finite weighted graph.  Here
$V$ is the vertex set, $E$ is the edge set, and
$\mu=(\mu_{xy})_{x\sim y}$ is a family of symmetric positive edge
weights, so that $\mu_{xy}=\mu_{yx}>0$ whenever $x\sim y$.  We write
$x\sim y$ when $\{x,y\}\in E$, and set
\[
\mu(x)=\sum_{y\sim x}\mu_{xy}<\infty.
\]
For $A\subset V$, write
\[
\mu(A)=\sum_{x\in A}\mu(x).
\]
For a real-valued function $u$ on $V$, define the normalized graph
Laplacian by
\[
\Delta u(x)=\frac{1}{\mu(x)}
\sum_{y\sim x}\mu_{xy}\bigl(u(y)-u(x)\bigr).
\]
We write $d(x,y)$ for the combinatorial graph distance on $(V,E)$.  Fix a
root $o\in V$, and set
\[
B_n=B(o,n)=\{x\in V:d(o,x)\le n\}.
\]
%
%

Infinite weighted graphs provide a discrete setting in which volume
growth, recurrence, and potential theory are closely related; see
\cite{AGbook,WWbook} for general background.  As one illustration, the
Nash--Williams criterion yields the following volume-growth sufficient
condition for recurrence: if
\[
\sum_{n=1}^{\infty}\frac{n}{\mu(B_n)}=\infty,
\]
then every nonnegative function satisfying $-\Delta u\ge0$ is constant; see
\cite{LyonsPeres17,NashWilliams59,Soardi94}.  Related inequalities for
$p$-superharmonic functions on networks and for superharmonic functions on
graphs were developed in \cite{SC95} and \cite{SC97}, respectively.  More
recently, scalar semilinear inequalities and nonlinear extensions on
weighted graphs have been investigated in
\cite{DLS26,GW25,GHS23,HS23}.

The aim of the present paper is to establish a sharp integral nonexistence
criterion for nonnegative solutions of the Lane--Emden system of
inequalities
\begin{equation}\label{eq:system-normalized}
	-\Delta u\ge v^p,
	\qquad
	-\Delta v\ge u^q,
	\qquad p,q>0,\quad pq>1,
\end{equation}
on arbitrary locally finite weighted graphs.  A pair $(u,v)$ is called a nonnegative solution of
\eqref{eq:system-normalized} if $u,v\ge0$ and both inequalities hold
pointwise on $V$.

Several Liouville-type theorems for Lane--Emden systems on weighted graphs
have recently been established in
\cite{DuongMinh25,MinhDuongNguyen24,MinhQuyetDuong25}.  These works
underscore the additional difficulties caused by coupling.  The scalar
problem provides a useful benchmark: the authors recently proved that
\[
\sum_{n=1}^{\infty}
\frac{n^{2q-1}}{\mu(B_n)^{q-1}}=\infty
\]
forces every nonnegative solution of $-\Delta u\ge u^q$ to vanish
identically; see \cite{GHHS26}.
This integral criterion is not restricted
to a prescribed asymptotic form of the volume.  In particular, within the
polynomial--logarithmic scale it identifies the critical exponents
$2q/(q-1)$ and $1/(q-1)$.

For the coupled system, standard test-function arguments identify the
critical polynomial scale
\[
R^{\,2+2(P+1)/(pq-1)},\qquad P=\max\{p,q\},
\]
but do not determine the logarithmic endpoint at that scale.  Resolving
this loss of information is the main issue addressed here.  We obtain an
integral criterion for the asymmetric system that is valid on arbitrary
locally finite weighted graphs and requires neither volume doubling nor a
Poincar\'{e} inequality.

We now state the results in the asymmetric case $p\ne q$ and the symmetric
case $p=q$.
\begin{theorem}\label{thm:sharp-asymmetric}
	Assume
	\[
	p,q>0,
	\qquad pq>1,
	\qquad p\ne q,
	\]
	and set
	\[
	P=\max\{p,q\}.
	\]
	If, for one root $o\in V$,
	\begin{equation}\label{eq:sharp-series}
		\sum_{n=2}^{\infty}
		\frac{n^{2pq+2P-1}}{\mu(B_n)^{pq-1}}
		=\infty,
	\end{equation}
	then every nonnegative solution $(u,v)$ of
	\eqref{eq:system-normalized} satisfies $u\equiv v\equiv0$.
\end{theorem}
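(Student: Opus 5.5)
We argue by contradiction: assume $(u,v)$ is a nonnegative solution with $u\not\equiv0$ or $v\not\equiv0$. The first step is a reduction to strict positivity. If $u(x_0)=0$ somewhere, then $-\Delta u(x_0)\ge0$ forces $u(y)=0$ for all $y\sim x_0$, and connectedness gives $u\equiv0$; then $0\ge v^p$ gives $v\equiv0$. The same applies to $v$. Hence we may assume $u,v>0$ on $V$, and both are positive superharmonic functions. By symmetry of the roles, write the system so that $P=p\ge q$; the other case follows by swapping $(u,p)$ with $(v,q)$. Since the series criterion is insensitive to the choice of root up to bounded shifts, we work with the fixed $o$.

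\textbf{Lower bounds via flow decomposition.} Let $G_n$ be the Green function of $B_n$ with Dirichlet condition outside, and $\Phi_n$ the unit current (Green current) from $o$ to $\partial B_n$. I would decompose $\Phi_n$ into a superposition of simple paths $\gamma$ from $o$ to the boundary, each crossing every sphere $S_k$, $k\le n$. Summing the discrete Green identity along the flow gives, for a superharmonic $w>0$ with $-\Delta w\ge f$,
\[
w(o)\ \ge\ \sum_{x\in B_n} G_n(o,x)f(x)\mu(x),
\]
and, along the flow lines, an estimate of the energy of the current through the annulus $B_{2k}\setminus B_k$ by Cauchy--Schwarz: $\mathrm{Res}(B_k\to B_{2k})\ge k^2/\mu(B_{2k}\setminus B_k)$. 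This produces lower bounds for $u$ and $v$ averaged on dyadic annuli in terms of $k^2/\mu(B_{2k})$ times averages of $v^p$ and $u^q$, respectively.

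\textbf{Nonlinear testing.} Set $a_k=\mu(B_{2^k})^{-1}\sum_{B_{2^k}}u^q\mu$ and $b_k$ analogously for $v^p$. Test $-\Delta u\ge v^p$ against $\varphi^s$, where $\varphi$ is a cutoff on $B_{2R}$ that is linear in the distance, with $s$ large. A summation by parts gives
\[
\sum v^p\varphi^s\mu\le C R^{-1}\Bigl(\sum u^{q}\varphi^s\mu\Bigr)^{1/q}\bigl(\mu(B_{2R}\setminus B_R)\bigr)^{1-1/q}\cdot(\text{gradient term}),
\]
and a symmetric inequality is obtained from the second equation. Composing the two inequalities with exponents $p$ and $q$ and iterating over dyadic scales yields a recursive inequality of the form
\[
J_k\ \ge\ c\,\frac{2^{k(2pq+2P)}}{\mu(B_{2^k})^{pq-1}}\,J_{k-1}^{\,\theta},
\]
where $J_k$ is a localized energy of $u^q\varphi^s$. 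The mismatch of exponents when $p\neq q$ is what produces the $2P$ in place of $p+q$.

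\textbf{Main obstacle.} The main obstacle is to capture the logarithmic endpoint. A plain dyadic iteration of the recursion only gives divergence of $\sum_k 2^{k(2pq+2P)}\mu(B_{2^k})^{1-pq}$ along a single scale sequence, which misses the borderline. Instead, I would use the flow decomposition to run the testing on every unit shell $S_n$ rather than on dyadic annuli. This gives a difference inequality of the type
\[
F(n)-F(n-1)\ \ge\ c\,\frac{n^{2pq+2P-1}}{\mu(B_n)^{pq-1}}\,F(n)^{\alpha},\qquad \alpha>1,
\]
for a positive increasing quantity $F$ built from $u,v$ near $o$. Dividing by $F^\alpha$ and summing telescopes to $\sum_n n^{2pq+2P-1}\mu(B_n)^{1-pq}\le C\,F(1)^{1-\alpha}<\infty$, which contradicts \eqref{eq:sharp-series}. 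Deriving this per-shell inequality with exactly the exponent $2pq+2P-1$, and without doubling or Poincaré assumptions, is the delicate step. I expect it to require choosing test functions adapted to the flow potential rather than to the graph distance, with Hölder balancing on each path of the decomposition.
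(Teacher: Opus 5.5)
Your positivity reduction is fine, but there is a genuine gap: you never supply the step that is supposed to produce the sharp endpoint. You give no candidate $F$ and no mechanism for the per-shell inequality
\[
F(n)-F(n-1)\ge c\,\frac{n^{2pq+2P-1}}{\mu(B_n)^{pq-1}}\,F(n)^{\alpha}.
\]
It is also doubtful that any inequality that is local in $n$ and involves $\mu(B_n)$ can hold on an arbitrary graph. Without doubling, the natural per-scale quantities are the cut weights $b_n$ (the total conductance leaving $B_n$). These can be compared with ball volumes only after summing over blocks of scales, via $\sum_{j\le m}b_j\le\mu(B_m)$.

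The dyadic part of your sketch does not rescue this, and it is itself unjustified here. From $u(o)\ge\sum_x G_n(o,x)v(x)^p\mu(x)$ you cannot extract annulus averages weighted by $k^2/\mu(B_{2k})$. That would need pointwise lower bounds for $G_n(o,\cdot)$ on annuli, which is Harnack-type information unavailable without doubling and Poincar\'e. Nash-Williams bounds only the resistance, i.e.\ $G_n(o,o)$. The cutoff-testing recursion is likewise only asserted, and by your own account it loses the logarithm.

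The missing idea is to avoid any recursion and to sandwich one global quantity built from the Dirichlet Green function $g_R$ of $B_R$ (with $Lg_R=\1_{\{o\}}$ on $B_R$), namely $M(R)=\sum_{B_R}\rho^{2P}g_R^{pq}\mu$ with $\rho=1+d(o,\cdot)$. Take $q>p$, so $P=q$.

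\textbf{Upper bound.} Test $Lv\ge\mu u^q$ against $(g_R/v)^{pq}$ and apply edgewise the Picone inequality $(a-b)(s^\alpha-t^\alpha)\le\frac{\alpha}{\alpha-1}(as-bt)(s^{\alpha-1}-t^{\alpha-1})$. This gives $\sum u^q(g_R/v)^{pq}\mu\le C\,(g_R(o)/v(o))^{pq-1}$. Separately, apply the ground-state Hardy inequality for $h=u$, $\lambda=v^p/u$ to $\rho^{q+1}g_R^{pq/2}$. Combine it with a Caccioppoli estimate for $g_R$ and Young's inequality $t^q\le\eps t^{q+1}+C\eps^{-q}$ at $t=\lambda\rho^2$. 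This yields $M(R)\le C\sum\lambda^{-q}g_R^{pq}\mu$, and $\lambda^{-q}g_R^{pq}=u^q(g_R/v)^{pq}$ exactly; this matching is where $P=\max\{p,q\}$ comes from. Since $M(R)\ge\mu(o)g_R(o)^{pq}$, $M(R)$ is bounded uniformly in $R$.

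\textbf{Lower bound.} The Green current is acyclic, because $g_R$ decreases strictly along it. It therefore decomposes into a probability measure on directed paths from $o$ to the boundary. On each path, keep the first exit edge from every $B_k$. Use that $g_R$ at its inner endpoint dominates the sum of the selected drops at scales $\ge k$, and apply a one-dimensional weighted Hardy inequality. Then use Jensen for the concave parallel-sum map, together with the fact that the expected reciprocal of the selected drop at scale $k$ is at most $b_k$. This gives
\[
M(R)\ge c\sum_{n\le R}n^{2P+1}\Bigl(\sum_{k=n}^R b_k^{-1}\Bigr)^{pq-1}.
\]
A Hardy-type comparison using $\sum_{j\le m}b_j\le\mu(B_m)$ shows that this diverges under the volume hypothesis, which is the contradiction.

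The logarithmic endpoint survives because the lower bound sums over every integer scale with no dyadic loss, while the upper bound is uniform in $R$. No per-shell inequality is needed.
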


\begin{remark}
	The divergence of \eqref{eq:sharp-series} is independent of the chosen
	root.  If $d(o,o')=m$, then
	\[
	B(o,n-m)\subset B(o',n)\subset B(o,n+m),
	\qquad n\ge m,
	\]
	and hence
	\[
	\mu(B(o,n-m))\le \mu(B(o',n))\le \mu(B(o,n+m)).
	\]
	After reindexing, the factors $(n\pm m)^{2pq+2P-1}$ are comparable to
	$n^{2pq+2P-1}$ for all sufficiently large $n$.  Hence the two positive
	series associated with $o$ and $o'$ converge or diverge together.
\end{remark}

\begin{corollary}\label{cor:critical-log}
	Let $p\ne q$, $pq>1$, and $P=\max\{p,q\}$.  Define the critical volume exponent
	\begin{align}
		D_{\rm sys}=2+\frac{2(P+1)}{pq-1}.
	\end{align}
	If, for all sufficiently large $R$,
	\[
	\mu(B_R)\le C R^{D_{\rm sys}}(\log R)^\theta
	\]
	and
	\[
	\theta\le \frac{1}{pq-1},
	\]
	then every nonnegative solution $(u,v)$ of
	\eqref{eq:system-normalized} satisfies $u\equiv v\equiv0$.
\end{corollary}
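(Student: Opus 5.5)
The corollary follows from Theorem~\ref{thm:sharp-asymmetric}: it is enough to check that the volume bound makes the series \eqref{eq:sharp-series} diverge. Write $\alpha=pq-1>0$. The first step is an exponent computation:
\[
(pq-1)D_{\rm sys}=2(pq-1)+2(P+1)=2pq+2P.
\]
So for all sufficiently large $n$ the hypothesis gives
\[
\mu(B_n)^{pq-1}\le C^{\alpha} n^{2pq+2P}(\log n)^{\theta\alpha}.
\]

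Substituting this into the general term of \eqref{eq:sharp-series} gives, for $n\ge n_0$,
\[
\frac{n^{2pq+2P-1}}{\mu(B_n)^{pq-1}}\ge C^{-\alpha}\,\frac{n^{2pq+2P-1}}{n^{2pq+2P}(\log n)^{\theta\alpha}}=\frac{C^{-\alpha}}{n(\log n)^{\theta\alpha}}.
\]
The hypothesis $\theta\le 1/(pq-1)$ means exactly $\theta\alpha\le 1$. The series $\sum_{n\ge 2} n^{-1}(\log n)^{-s}$ diverges for every $s\le1$, by the integral test or Cauchy condensation. At the endpoint $s=1$ this is the divergence of $\sum 1/(n\log n)$. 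If $\theta<0$, the series dominates the harmonic series and diverges trivially. By comparison, \eqref{eq:sharp-series} diverges.

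Theorem~\ref{thm:sharp-asymmetric} then yields $u\equiv v\equiv 0$. The volume bound is required only for $R$ large, and the hypotheses $p\neq q$ and $pq>1$ are inherited directly, so nothing else needs checking.

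There is no real obstacle here. The only point that needs care is that the logarithmic power becomes $(\log n)^{\theta(pq-1)}$ after the volume is raised to the power $pq-1$. This explains why the threshold is $\theta=1/(pq-1)$ rather than $\theta=1$, and why the endpoint case rests on the divergence of $\sum 1/(n\log n)$.
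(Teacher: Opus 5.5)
Your proposal is correct and follows the paper's proof: you compute $(pq-1)D_{\rm sys}=2pq+2P$, bound the general term of \eqref{eq:sharp-series} below by $c/\bigl(n(\log n)^{\theta(pq-1)}\bigr)$ for large $n$, and use the divergence of this series when $\theta(pq-1)\le1$. Theorem~\ref{thm:sharp-asymmetric} then gives $u\equiv v\equiv0$.
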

\begin{remark}
	Earlier Liouville theorems for elliptic systems on weighted graphs were
	proved in
	\cite{DuongMinh25,MinhDuongNguyen24,MinhQuyetDuong25}.
	The work \cite{DuongMinh25} treats a standard-Laplacian Lane--Emden
	system under additional assumptions on the edge weights and volume
	growth.  The work \cite{MinhDuongNguyen24} allows positive potentials in
	the two equations, under hypotheses involving a suitable graph distance
	and weighted volume growth, while \cite{MinhQuyetDuong25} treats systems
	involving possibly different graph $p_i$-Laplacians and a coefficient in
	the nonlinear terms.  The present results are more specialized with
	respect to the operators and coefficients.  For the constant-coefficient
	system \eqref{eq:system-normalized}, however, Theorem
	\ref{thm:sharp-asymmetric} gives an integral criterion on arbitrary
	locally finite weighted graphs, and Corollary
	\ref{cor:critical-log} identifies the critical logarithmic endpoint in
	the polynomial--logarithmic scale.  Thus these results are complementary.
\end{remark}

\begin{theorem}\label{thm:symmetric}
	Let $p=q=\sigma>1$.  If
	\begin{equation}\label{eq:scalar-series}
		\sum_{n=2}^{\infty}
		\frac{n^{2\sigma-1}}{\mu(B_n)^{\sigma-1}}=\infty,
	\end{equation}
	then every nonnegative solution $(u,v)$ of
	\[
	-\Delta u\ge v^\sigma,
	\qquad
	-\Delta v\ge u^\sigma
	\]
	satisfies $u\equiv v\equiv0$.
\end{theorem}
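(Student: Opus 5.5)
Our plan is to reduce Theorem \ref{thm:symmetric} to the scalar criterion of \cite{GHHS26}, as the abstract indicates, by working with the sum $w=u+v$. Let $(u,v)$ be a nonnegative solution with $p=q=\sigma>1$, and set $w=u+v\ge 0$. Adding the two inequalities and using linearity of $\Delta$ gives
\[
-\Delta w=-\Delta u-\Delta v\ge v^\sigma+u^\sigma .
\]
Since $t\mapsto t^\sigma$ is convex for $\sigma>1$, we have $u^\sigma+v^\sigma\ge 2^{1-\sigma}(u+v)^\sigma$, so
\[
-\Delta w\ge 2^{1-\sigma}w^\sigma .
\]

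Next we remove the constant by rescaling. Put $\tilde w=c\,w$ with $c>0$ to be chosen. Then
\[
-\Delta\tilde w\ge 2^{1-\sigma}c^{1-\sigma}\tilde w^\sigma .
\]
Choosing $c^{\sigma-1}=2^{1-\sigma}$, that is $c=1/2$, makes the coefficient equal to one. Thus $\tilde w=(u+v)/2$ is a nonnegative solution of $-\Delta\tilde w\ge\tilde w^\sigma$ on $V$. (Alternatively, the result of \cite{GHHS26} may already allow a positive constant coefficient, in which case the rescaling is unnecessary.)

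Now apply the scalar result of \cite{GHHS26}. Its hypothesis is $\sum_{n\ge1} n^{2\sigma-1}\mu(B_n)^{1-\sigma}=\infty$, and this is exactly \eqref{eq:scalar-series}, since omitting the term $n=1$ does not affect divergence. Hence $\tilde w\equiv 0$. Because $u,v\ge0$, it follows that $u\equiv v\equiv0$.

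There is essentially no obstacle here. The only points to check are the convexity inequality and that the cited scalar theorem applies to inequalities on arbitrary locally finite weighted graphs with the same normalized Laplacian. The abstract and introduction indicate both.
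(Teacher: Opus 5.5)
Your proposal is correct and matches the paper's proof: you set $w=u+v$, use convexity to get $-\Delta w\ge 2^{1-\sigma}w^\sigma$, rescale to $-\Delta(w/2)\ge (w/2)^\sigma$, and invoke the scalar criterion of \cite{GHHS26}. Your remark that starting the series at $n=1$ or $n=2$ does not affect divergence is the right check to make.
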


A striking feature appears at the critical logarithmic scale.  The
polynomial exponent is continuous across the diagonal: as
$(p,q)\to(\sigma,\sigma)$ through $p\ne q$, one has
\[
D_{\rm sys}\longrightarrow
2+\frac{2(\sigma+1)}{\sigma^2-1}
=\frac{2\sigma}{\sigma-1},
\]
which is the scalar critical exponent.  The logarithmic endpoint is \emph{not}
continuous.  The asymmetric value $1/(pq-1)$ tends to
$1/(\sigma^2-1)$, whereas on the diagonal the Liouville question reduces,
via $w=u+v$, to the scalar inequality and the endpoint is
$1/(\sigma-1)$.  Thus the symmetric logarithmic threshold cannot be
recovered by taking a limit of the asymmetric formula.

Theorem \ref{thm:sharp-asymmetric} is formulated in terms of the full volume
sequence and therefore has a wider scope than the
polynomial--logarithmic corollary.  Its critical endpoint is optimal in the
following precise sense.  For every $\eps>0$, Section \ref{counter}
constructs a weighted half-line satisfying
\[
\mu(B(o,R))\asymp
R^{D_{\rm sys}}(\log R)^{1/(pq-1)+\eps}
\]
on which \eqref{eq:system-normalized} admits a positive solution.  


The key new ingredient is a weighted lower estimate obtained from a flow
decomposition of the finite Green current.  The flow decomposition theorem
for finite acyclic flows is classical; see \cite{FFbook}.  It represents
the current as a probability measure on directed root-to-boundary paths, along
which one-dimensional Hardy estimates can be applied.  Averaging these
estimates over the decomposition and using a parallel-sum argument lead to
the series in \eqref{eq:sharp-series}.  This mechanism first appeared in our recent work \cite{GHHS26} for scalar Lane--Emden inequalities. For related background on flows in
networks, see \cite{BermanKonsowa90,LyonsPeres17,Soardi94}.  The new point
is to use flow decomposition as an analytic device for the nonlinear
system.  Combined with the upper estimate obtained by Picone testing, it
connects volume growth with the weighted Green mass and determines the
sharp threshold for the coupled system.

The paper is organized as follows.  Section \ref{conse} introduces the
finite Green voltage and establishes the preliminary estimates.  Section
\ref{upp-bound} proves a uniform upper bound for the weighted Green mass in
the presence of a nontrivial nonnegative solution of the asymmetric
system.  Section \ref{lbdd} derives the complementary lower bound by flow
decomposition.  Section \ref{prf} contains the proofs of Theorem
\ref{thm:sharp-asymmetric}, Corollary \ref{cor:critical-log}, and Theorem
\ref{thm:symmetric}.  Finally, Section \ref{counter} establishes the
sharpness of the logarithmic endpoint in Corollary
\ref{cor:critical-log} within the critical polynomial--logarithmic volume
scale.

\section{Preliminary estimates}\label{conse}
Throughout the proof sections, all radius parameters are positive integers.
For the proofs it is convenient to use the operator
\[
Lf(x):=\mu(x)(-\Delta f)(x)
=\sum_{y\sim x}\mu_{xy}\bigl(f(x)-f(y)\bigr).
\]
Thus \eqref{eq:system-normalized} is equivalently written as
\[
Lu(x)\ge \mu(x)v(x)^p,
\qquad
Lv(x)\ge \mu(x)u(x)^q,
\qquad x\in V.
\]
We use $L$ only in the proof calculations; the system and the main results
remain formulated in terms of the normalized Laplacian $\Delta$.

For every $R\ge1$, let $g_R$ be the unique function satisfying
\begin{equation}\label{eq:green-equation}
	Lg_R(x)=\1_{\{o\}}(x),\qquad x\in B_R,
\end{equation}
and $g_R=0$ on $V\setminus B_R$.  Since $B_R$ is finite and connected and
has nonempty edge boundary, the finite Dirichlet problem has a unique
solution; the discrete strong maximum principle gives $g_R>0$ on $B_R$.

For any real-valued function $f$ on $V$ and every finitely supported function $\varphi$, the
following edge sum is finite, and we write
\[
\cE(f,\varphi)
=\sum_{\{x,y\}\in E}\mu_{xy}\bigl(f(x)-f(y)\bigr)
\bigl(\varphi(x)-\varphi(y)\bigr),
\]
where the sum is over unordered edges.  Summation by parts gives
\[
\cE(f,\varphi)
=\sum_{x\in V}Lf(x)\varphi(x)
=\sum_{x\in V}(-\Delta f)(x)\varphi(x)\mu(x).
\]
In particular, from \eqref{eq:green-equation},
\begin{equation}\label{eq:green-energy-identity}
	\cE(g_R,\varphi)=\varphi(o)
\end{equation}
for every function $\varphi$ supported in $B_R$.

Set the shifted radius
\[
\rho(x)=1+d(o,x),\qquad x\in V.
\]
For $s>1$ and $P\ge0$, define the weighted Green mass by
\begin{equation*}
	M_{s,P}(R)
	=\sum_{x\in B_R}\rho(x)^{2P}g_R(x)^s\mu(x).
\end{equation*}
Here $P$ is a generic nonnegative weight exponent.  In the asymmetric
Lane--Emden argument it will be specialized to $P=\max\{p,q\}$.

Later, we show that, in the asymmetric case, the existence of a
nontrivial nonnegative solution of \eqref{eq:system-normalized} forces
$M_{pq,P}(R)$ to be uniformly bounded in $R$; see Proposition
\ref{prop:system-upper-endpoint}.  The Green-current flow gives the
corresponding lower bound; see Proposition
\ref{prop:weighted-flow-lower}.  We first record several preliminary
consequences and estimates for \eqref{eq:system-normalized}.
\begin{lemma}\label{lem:zero-propagation}
	Let $(u,v)$ be a nonnegative solution of \eqref{eq:system-normalized}.  If
	$u(x_0)=0$ or $v(x_0)=0$ at one vertex $x_0$, then $u\equiv v\equiv0$ on $V$.  Hence every
	nontrivial nonnegative solution is strictly positive everywhere.
\end{lemma}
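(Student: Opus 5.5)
The plan is to use the discrete strong minimum principle for nonnegative superharmonic functions, in the form of the operator $L$. Suppose first that $u(x_0)=0$. Since $Lu(x_0)\ge \mu(x_0)v(x_0)^p\ge0$, we get
\[
0\le Lu(x_0)=\sum_{y\sim x_0}\mu_{x_0y}\bigl(u(x_0)-u(y)\bigr)=-\sum_{y\sim x_0}\mu_{x_0y}u(y)\le 0,
\]
because $u\ge0$ and $\mu_{x_0y}>0$. All inequalities in this chain are therefore equalities. First, $u(y)=0$ for every neighbor $y\sim x_0$. Second, $\mu(x_0)v(x_0)^p=0$, so $v(x_0)=0$, since $p>0$ and $\mu(x_0)>0$.

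Next I would propagate the zero set. The computation above shows that the set $Z=\{x:u(x)=0\}$ is closed under taking neighbors. As $(V,E)$ is connected, $Z=V$, so $u\equiv0$. The first inequality of \eqref{eq:system-normalized} then gives $0=Lu(x)\ge\mu(x)v(x)^p$ at every $x$, hence $v\equiv0$.

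If instead $v(x_0)=0$, the same argument applies with the roles of $(u,p)$ and $(v,q)$ exchanged, using $Lv\ge\mu u^q$ and $q>0$. This yields $v\equiv0$, and then $u\equiv0$.

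The last assertion is just the contrapositive: if $(u,v)$ is not identically zero, then neither $u$ nor $v$ vanishes anywhere, so both are strictly positive everywhere.

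No step is a genuine obstacle. The only points needing care are the positivity of the edge weights and of $\mu(x)$, which forces each neighbor term to vanish, and the condition $p,q>0$, which turns $v^p=0$ into $v=0$. The connectedness of the graph is used only in the propagation step.
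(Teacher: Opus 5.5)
Your proof is correct and follows essentially the same route as the paper: at a zero of $u$ the chain $0\le Lu(x_0)=-\sum_{y\sim x_0}\mu_{x_0y}u(y)\le 0$ forces all neighbors to vanish and $v(x_0)=0$, and connectedness propagates this to all of $V$. The only difference is cosmetic: you propagate the zero set of $u$ using the first inequality alone and then obtain $v\equiv0$ from $0=Lu\ge\mu v^p$, whereas the paper propagates the zeros of $u$ and $v$ together using both inequalities.
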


\begin{proof}
	Assume $u(x_0)=0$.  Since $u\ge0$,
	\[
	Lu(x_0)=\sum_{y\sim x_0}\mu_{x_0y}(0-u(y))\le0.
	\]
	But the $u$-inequality in \eqref{eq:system-normalized} gives
	\[
	Lu(x_0)\ge v(x_0)^p\mu(x_0)\ge0.
	\]
	Hence $v(x_0)=0$ and $u(y)=0$ for every $y\sim x_0$.  Applying the $v$-inequality at
	$x_0$ gives
	\[
	Lv(x_0)=\sum_{y\sim x_0}\mu_{x_0y}(0-v(y))
	\ge u(x_0)^q\mu(x_0)=0,
	\]
	while the left side is nonpositive.  Therefore $v(y)=0$ for every $y\sim x_0$.  Repeating
	this argument along paths and using connectedness gives $u\equiv v\equiv0$.  The case
	$v(x_0)=0$ is symmetric.  This completes the proof.
\end{proof}
We next record an elementary Picone inequality.
\begin{lemma}\label{lem:picone}
	Let $a,b>0$, $s,t\ge0$, and $\alpha>1$.  Then
	\begin{equation*}
		(a-b)(s^\alpha-t^\alpha)
		\le
		\frac{\alpha}{\alpha-1}(as-bt)(s^{\alpha-1}-t^{\alpha-1}).
	\end{equation*}
\end{lemma}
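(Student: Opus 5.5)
The plan is to reduce the inequality to a one-variable statement by homogeneity and symmetry, and then settle that statement by convexity.

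\emph{Degenerate cases.} If $s=t$ both sides vanish. If $t=0$, the inequality reads $(a-b)s^\alpha\le \frac{\alpha}{\alpha-1}as^\alpha$. This holds because $a-b\le a\le\frac{\alpha}{\alpha-1}a$. The case $s=0$ is symmetric: it reads $(b-a)t^\alpha\le\frac{\alpha}{\alpha-1}bt^\alpha$.

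\emph{Reduction.} So we may assume $s,t>0$ and $s\ne t$. The inequality is invariant under the swap $(a,s)\leftrightarrow(b,t)$, since both sides are products of two antisymmetric factors. Hence we may assume $s>t$. Both sides are homogeneous of degree $1$ in $(a,b)$ and of degree $\alpha$ in $(s,t)$. We therefore normalize $t=1$ and $b=1$, writing $s=\lambda>1$ and $a=r>0$. The claim becomes
\[
(r-1)(\lambda^\alpha-1)\le \frac{\alpha}{\alpha-1}(r\lambda-1)(\lambda^{\alpha-1}-1).
\]

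\emph{Linearity in $r$.} The difference $F(r)=\text{RHS}-\text{LHS}$ is affine in $r$. It therefore suffices to check two things: the value at $r=0$ is nonnegative, and the slope is nonnegative. Nonnegativity at the left endpoint plus a nonnegative slope gives $F\ge0$ on all of $(0,\infty)$. Some care is needed here, since the domain is $r>0$; if the slope turns out negative we would instead check the limit $r\to\infty$.

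\emph{The value at $r=0$.} We have $F(0)=(\lambda^\alpha-1)-\frac{\alpha}{\alpha-1}(\lambda^{\alpha-1}-1)$. We need $(\alpha-1)(\lambda^\alpha-1)\ge\alpha(\lambda^{\alpha-1}-1)$ for $\lambda>1$. Both sides vanish at $\lambda=1$. Their derivatives are $\alpha(\alpha-1)\lambda^{\alpha-1}$ and $\alpha(\alpha-1)\lambda^{\alpha-2}$, and the first is at least the second for $\lambda\ge1$. So $F(0)\ge0$.

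\emph{The slope.} The slope is $\frac{\alpha}{\alpha-1}\lambda(\lambda^{\alpha-1}-1)-(\lambda^\alpha-1)$. This equals $\frac{1}{\alpha-1}\bigl[\lambda^\alpha-\alpha\lambda+\alpha-1\bigr]$, after checking: $\alpha\lambda^\alpha-\alpha\lambda-(\alpha-1)\lambda^\alpha+(\alpha-1)=\lambda^\alpha-\alpha\lambda+\alpha-1$. This is nonnegative by Bernoulli's inequality (tangent-line convexity of $\lambda^\alpha$ at $1$). So $F$ is nondecreasing and $F(r)\ge F(0)\ge0$.

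The main obstacle is only bookkeeping. One must verify that the symmetric swap is legitimate, which it is because the inequality is invariant under the swap. The other point of care is the sign handling when $s<t$, which is covered by that swap.
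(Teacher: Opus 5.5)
Your proof is correct, but it takes a different route from the paper. The paper avoids case distinctions and normalization. Assuming $s>t$, it notes that for every $\lambda\in[t,s]$ one has $as-bt-\lambda(a-b)=a(s-\lambda)+b(\lambda-t)\ge0$. It then multiplies by $\alpha\lambda^{\alpha-2}$ and integrates over $[t,s]$, which amounts to the identity
\[
\frac{\alpha}{\alpha-1}(as-bt)(s^{\alpha-1}-t^{\alpha-1})-(a-b)(s^\alpha-t^\alpha)
=\alpha\int_t^s\bigl[a(s-\lambda)+b(\lambda-t)\bigr]\lambda^{\alpha-2}\,d\lambda .
\]
Your two checks are exactly the two coefficients of this linear form in $(a,b)$. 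After your normalization $b=t=1$, $s=\lambda$:
\begin{itemize}
\item the slope of $F$ is the coefficient of $a$, namely $\alpha\int_1^{\lambda}(\lambda-\tau)\tau^{\alpha-2}\,d\tau$;
\item $F(0)$ is the coefficient of $b$, namely $\alpha\int_1^{\lambda}(\tau-1)\tau^{\alpha-2}\,d\tau$.
\end{itemize}

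So you verify the nonnegativity of each coefficient separately by one-variable calculus (derivative comparison and Bernoulli). The paper gets both at once from a single pointwise inequality. The paper's version is shorter. It needs no separate treatment of $s=0$ or $t=0$, since $\lambda^{\alpha-2}$ is integrable at $0$ when $\alpha>1$, and it needs no homogeneity reduction. Your version is entirely elementary and makes the structure ``affine in $a$ with nonnegative intercept and slope'' explicit. Your aside about checking $r\to\infty$ if the slope were negative is unnecessary, since you show the slope is nonnegative.
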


\begin{proof}
	If $s=t$, the claim is trivial.  By interchanging $(a,s)$ and $(b,t)$ if necessary, assume
	$s>t$.  For every $\lambda\in[t,s]$,
	\[
	as-bt-\lambda(a-b)=a(s-\lambda)+b(\lambda-t)\ge0.
	\]
	Thus $\lambda(a-b)\le as-bt$.  Multiplying by $\lambda^{\alpha-2}$ and integrating from
	$t$ to $s$, we obtain
	\[
	\begin{aligned}
		(a-b)(s^\alpha-t^\alpha)
		&=\alpha\int_t^s\lambda^{\alpha-1}(a-b)\,d\lambda  \\
		&\le \alpha(as-bt)\int_t^s\lambda^{\alpha-2}\,d\lambda  \\
		&=\frac{\alpha}{\alpha-1}(as-bt)(s^{\alpha-1}-t^{\alpha-1}).
	\end{aligned}
	\]
\end{proof}

\begin{lemma}
	Assume that \eqref{eq:system-normalized} has a nontrivial nonnegative solution
	$(u,v)$.  Let $A>1$ and $B>1$.  Then, for every $R\ge1$,
	\begin{align}
		\sum_{x\in B_R}
		v(x)^p\left(\frac{g_R(x)}{u(x)}\right)^A\mu(x)
		&\le
		\frac{A}{A-1}\left(\frac{g_R(o)}{u(o)}\right)^{A-1},
		\label{eq:test-u}\\
		\sum_{x\in B_R}
		u(x)^q\left(\frac{g_R(x)}{v(x)}\right)^B\mu(x)
		&\le
		\frac{B}{B-1}\left(\frac{g_R(o)}{v(o)}\right)^{B-1}.
		\label{eq:test-v}
	\end{align}
\end{lemma}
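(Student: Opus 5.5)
We test the $u$-inequality against $\varphi=g_R^A/u^{A-1}$ and use the Picone inequality (Lemma \ref{lem:picone}) to compare the resulting edge sum with $\cE(g_R,\cdot)$. By Lemma \ref{lem:zero-propagation}, $u>0$ and $v>0$ everywhere, so $\varphi$ is well defined. It is nonnegative and supported in $B_R$, since $g_R=0$ outside $B_R$.

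Since $\varphi$ is finitely supported, summation by parts gives
\[
\sum_{x\in B_R}v(x)^p\frac{g_R(x)^A}{u(x)^{A-1}}\mu(x)\le\sum_{x}Lu(x)\varphi(x)=\cE(u,\varphi).
\]
The left side is exactly $u(o)^{A-1}$ times the left side of \eqref{eq:test-u}.

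The key step is an edgewise bound. For an edge $\{x,y\}$ we apply Lemma \ref{lem:picone} with
\[
\alpha=A,\qquad a=1/u(x),\qquad b=1/u(y),\qquad s=g_R(x),\qquad t=g_R(y).
\]
Then $s,t\ge0$, $a,b>0$, and $s^\alpha-t^\alpha$ with weights $a,b$ needs to produce $\varphi(x)-\varphi(y)$. To arrange this we rewrite the edge term $(u(x)-u(y))(\varphi(x)-\varphi(y))$ via the substitution $\varphi=u\,(g_R/u)^A$. The cleanest route is the standard Picone identity in ratio form. Put $w=g_R/u$, so $\varphi=u\,w^A$ and $g_R=u\,w$. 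The claim to verify is
\[
(u(x)-u(y))\bigl(u(x)w(x)^A-u(y)w(y)^A\bigr)\le\frac{A}{A-1}\bigl(g_R(x)-g_R(y)\bigr)\bigl(w(x)^{A-1}-w(y)^{A-1}\bigr).
\]
Expanding both sides, this reduces to Lemma \ref{lem:picone}, or to its dual form. The matching of variables is the main obstacle. I would first try setting $s=w(x)$, $t=w(y)$, $a=u(x)$, $b=u(y)$. Then $as-bt=g_R(x)-g_R(y)$, and the left side of the lemma is $(u(x)-u(y))(w(x)^A-w(y)^A)$, which is not yet the product above. If this fails, I would prove the displayed inequality directly by the same integration trick used in Lemma \ref{lem:picone}: parametrize along $\lambda\in[w(y),w(x)]$ and use convexity to bound the extra terms $u(x)u(y)$-cross contributions, which have a favorable sign.

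Granting the edgewise bound, we sum over edges:
\[
\cE(u,\varphi)\le\frac{A}{A-1}\,\cE\bigl(g_R,(g_R/u)^{A-1}\bigr).
\]
The function $(g_R/u)^{A-1}$ is supported in $B_R$, because $A>1$ and $g_R$ vanishes outside $B_R$. Hence \eqref{eq:green-energy-identity} gives
\[
\cE\bigl(g_R,(g_R/u)^{A-1}\bigr)=(g_R(o)/u(o))^{A-1}.
\]
Dividing by $u(o)^{A-1}$ appears unnecessary once the left side is written as $\sum v^p\,g_R^A\,u^{-A}\,u\,\mu$. I will recheck the powers carefully: with $\varphi=g_R^A/u^{A-1}$ the left side is $\sum v^p\,(g_R/u)^A\,u\,\mu$, which carries an extra factor $u$. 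So the correct test function is presumably $\varphi=(g_R/u)^A$ paired with the Picone variant for $\alpha=A$ and weights $1/u$, giving \eqref{eq:test-u} directly. I would settle this choice by checking both candidates against Lemma \ref{lem:picone}.

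Finally, \eqref{eq:test-v} follows identically by exchanging the roles of $(u,p,A)$ and $(v,q,B)$.
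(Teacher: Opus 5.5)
\textbf{There is a genuine gap at the edgewise step, which carries the whole lemma.} With your first test function $\varphi=g_R^A/u^{A-1}=u\,w^A$, the inequality you propose to verify is false. Take an edge with $w(x)=w(y)=\omega>0$ but $u(x)\ne u(y)$. Then the left side equals $\omega^A(u(x)-u(y))^2>0$, while the right side is $\frac{A}{A-1}(g_R(x)-g_R(y))\cdot 0=0$. So it does not reduce to Lemma \ref{lem:picone} or any dual form, and no integration trick will prove it. As you later noticed, this test function also gives the wrong left side, $\sum v^p w^A u\,\mu$; this carries a pointwise factor $u(x)$, not a constant $u(o)^{A-1}$.

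Your final correction switches to the right test function $\varphi=(g_R/u)^A$. But you then pair it with Picone using ``weights $1/u$''. With $a=1/u(x)$, $s=g_R(x)$, Picone controls $(1/u(x)-1/u(y))(g_R(x)^A-g_R(y)^A)$, which is not the edge term you need. You also leave the choice unsettled, so as written the proof is not complete.

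\textbf{The fix is the substitution you wrote down and discarded.} Test $Lu\ge v^p\mu$ against $w^A$ with $w=g_R/u$. Apply Lemma \ref{lem:picone} with $\alpha=A$, $a=u(x)$, $b=u(y)$, $s=w(x)$, $t=w(y)$. Its left side $(u(x)-u(y))(w(x)^A-w(y)^A)$ is exactly the edge term of $\cE(u,w^A)$. It only looked mismatched because you were comparing it with the edge term for $u\,w^A$. Also $as-bt=g_R(x)-g_R(y)$. Summing over edges and using \eqref{eq:green-energy-identity} gives
\[
\sum_{x\in B_R}v(x)^p w(x)^A\mu(x)\le\cE(u,w^A)\le\frac{A}{A-1}\cE(g_R,w^{A-1})=\frac{A}{A-1}\,w(o)^{A-1}.
\]
This is \eqref{eq:test-u} directly, with no division by $u(o)^{A-1}$, and it is the paper's argument. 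Estimate \eqref{eq:test-v} follows with $v$, $B$, and test function $(g_R/v)^B$. The rest of your setup is fine: positivity from Lemma \ref{lem:zero-propagation}, support in $B_R$ since $A>1$, and summation by parts.
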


\begin{proof}
	By Lemma \ref{lem:zero-propagation}, $u,v>0$ everywhere.  Set
	$w=g_R/u$.  Since $g_R$ is finitely supported, so is $w$.  Testing
	$Lu\ge v^p\mu$ against $w^A$ gives
	\[
	\sum_{x\in B_R}v(x)^p w(x)^A\mu(x)
	\le \sum_{x\in V}Lu(x)w(x)^A
	=\cE(u,w^A).
	\]
	Apply Lemma \ref{lem:picone} on each edge $\{x,y\}$ with
	\[
	a=u(x),\quad b=u(y),\quad s=w(x),\quad t=w(y).
	\]
	Since $as=g_R(x)$ and $bt=g_R(y)$, summing over unordered edges yields
	\[
	\cE(u,w^A)
	\le
	\frac{A}{A-1}\cE(g_R,w^{A-1}).
	\]
	By \eqref{eq:green-energy-identity},
	\[
	\cE(g_R,w^{A-1})=w(o)^{A-1}
	=\left(\frac{g_R(o)}{u(o)}\right)^{A-1}.
	\]
	This proves \eqref{eq:test-u}.  The proof of \eqref{eq:test-v} is identical, testing
	$Lv\ge u^q\mu$ with $(g_R/v)^B$.
\end{proof}



We next establish the weighted Green estimate used in the asymmetric
argument.  The first ingredient is the following ground-state inequality.
\begin{lemma}
	Let $h:V\to(0,\infty)$ and $\lambda:V\to(0,\infty)$ satisfy
	\begin{equation*}
		Lh(x)\ge \lambda(x)h(x)\mu(x),
		\qquad x\in V.
	\end{equation*}
	Then every finitely supported function $\phi$ satisfies
	\begin{equation}\label{eq:ground-state-hardy}
		\sum_{x\in V}\lambda(x)\phi(x)^2\mu(x)
		\le \cE(\phi,\phi).
	\end{equation}
\end{lemma}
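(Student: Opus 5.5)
The plan is the standard ground-state (Picone-type) substitution $\phi=h\psi$, with $\psi=\phi/h$. Since $h>0$ everywhere, $\psi$ is well defined and has the same finite support as $\phi$. All sums below are then finite.

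First, multiply the hypothesis $Lh\ge\lambda h\mu$ by $h\psi^2\ge0$ and sum over $V$. Summation by parts ($\cE(h,\varphi)=\sum Lh\,\varphi$ for finitely supported $\varphi$) gives
\[
\sum_{x\in V}\lambda(x)\phi(x)^2\mu(x)=\sum_{x\in V}\lambda h^2\psi^2\mu\le \sum_{x\in V}Lh(x)\,h(x)\psi(x)^2=\cE(h,h\psi^2).
\]
It therefore suffices to prove the edgewise inequality
\[
\bigl(h(x)-h(y)\bigr)\bigl(h(x)\psi(x)^2-h(y)\psi(y)^2\bigr)\le \bigl(h(x)\psi(x)-h(y)\psi(y)\bigr)^2
\]
for each edge. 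Multiplying it by $\mu_{xy}$ and summing over unordered edges then gives $\cE(h,h\psi^2)\le\cE(\phi,\phi)$.

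To verify the edge inequality, write $a=h(x)$, $b=h(y)$, $s=\psi(x)$, $t=\psi(y)$. Expanding, the right side minus the left side is
\[
a^2s^2-2abst+b^2t^2-\bigl(a^2s^2-abt^2-abs^2+b^2t^2\bigr)=ab(s-t)^2\ge0,
\]
using $a,b>0$. This is also the case $\alpha=2$ of Lemma \ref{lem:picone} with $s,t$ replaced by signed values, but the direct expansion is cleaner because $\phi$ may change sign, whereas Lemma \ref{lem:picone} assumes $s,t\ge0$.

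There is no real obstacle. The only points to check are that $h>0$, so the division is legitimate, and that the summation-by-parts identity applies, which it does because $h\psi^2$ is finitely supported.
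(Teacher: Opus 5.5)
Your proof is correct and is essentially the paper's argument. The paper also substitutes $r=\phi/h$ and uses the ground-state identity $\cE(\phi,\phi)-\sum_{x}\frac{Lh(x)}{h(x)}\phi(x)^2=\sum_{\{x,y\}\in E}\mu_{xy}h(x)h(y)\bigl(r(x)-r(y)\bigr)^2$, which is exactly your edgewise computation $ab(s-t)^2$ summed over edges, since $\sum_x Lh(x)\,h(x)\psi(x)^2=\sum_x \frac{Lh(x)}{h(x)}\phi(x)^2$; you simply make the summation-by-parts step through $\cE(h,h\psi^2)$ explicit.
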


\begin{proof}
	It is enough to prove
	\[
	\sum_{x\in V}\frac{Lh(x)}{h(x)}\phi(x)^2\le \cE(\phi,\phi).
	\]
	Set $r=\phi/h$.  Then
	\[
	\begin{aligned}
		\cE(\phi,\phi)
		&-\sum_{x\in V}\frac{Lh(x)}{h(x)}\phi(x)^2  \\
		&=\sum_{\{x,y\}\in E}\mu_{xy}h(x)h(y)
		\bigl(r(x)-r(y)\bigr)^2\ge0.
	\end{aligned}
	\]
	Since $Lh(x)/h(x)\ge \lambda(x)\mu(x)$ pointwise,
	\eqref{eq:ground-state-hardy} follows.
\end{proof}

\begin{lemma}\label{lem:two-point-caccioppoli}
	For each $s>1$, there are constants $c_s,C_s>0$ such that the following
	estimate holds whenever $a,b,\alpha,\beta\ge0$:
	\begin{equation}\label{eq:two-point-caccioppoli}
		\begin{aligned}
			&(a-b)\bigl(\alpha^2a^{s-1}-\beta^2b^{s-1}\bigr)  \\
			&\qquad\ge
			c_s\bigl(\alpha a^{s/2}-\beta b^{s/2}\bigr)^2
			-C_s(\alpha-\beta)^2(a^s+b^s).
		\end{aligned}
	\end{equation}
\end{lemma}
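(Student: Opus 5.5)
The plan is to reduce \eqref{eq:two-point-caccioppoli} to an exact algebraic decomposition plus two scalar inequalities in the ratio $t=a/b$, and then absorb the cross term by Young's inequality. Write
\[
m=\tfrac{\alpha+\beta}{2},\qquad \delta=\alpha-\beta,\qquad A=a^{s/2},\qquad B=b^{s/2},
\]
so that $\alpha=m+\delta/2$, $\beta=m-\delta/2$ and $\alpha\beta=m^2-\delta^2/4$. A direct check gives the identity
\[
(a-b)\bigl(\alpha^2a^{s-1}-\beta^2b^{s-1}\bigr)
=\alpha\beta\,(a-b)(a^{s-1}-b^{s-1})
+\delta\,(a-b)\bigl(\alpha a^{s-1}+\beta b^{s-1}\bigr).
\]
Substituting $\alpha,\beta$ in terms of $m,\delta$ in the second summand gives
\[
\delta(a-b)\bigl(\alpha a^{s-1}+\beta b^{s-1}\bigr)
=m\delta\,(a-b)(a^{s-1}+b^{s-1})+\tfrac{\delta^2}{2}(a-b)(a^{s-1}-b^{s-1}).
\]
The last piece is nonnegative because $s>1$, so I drop it.

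Next I record three scalar facts, valid for $a,b\ge0$ and $s>1$.
\begin{enumerate}[label=(\roman*)]
\item $(a-b)(a^{s-1}-b^{s-1})\ge \kappa_s(A-B)^2$ for some $\kappa_s>0$.
\item $(a-b)(a^{s-1}-b^{s-1})\le a^s+b^s$. Expanding, the difference is $ab^{s-1}+ba^{s-1}\ge0$.
\item $|a-b|(a^{s-1}+b^{s-1})\le K_s|A-B|(A+B)=K_s|a^s-b^s|$.
\end{enumerate}
For (i) and (iii) I use homogeneity: both sides have degree $s$, so it suffices to set $b=1$, $t=a$ (the cases $a=0$ or $b=0$ are immediate) and show that the ratio of the two sides is a continuous positive function of $t\in[0,\infty)\setminus\{1\}$ with positive finite limits at $t\to1$, $t\to0$ and $t\to\infty$. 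Near $t=1$ both sides of (i) are comparable to $(t-1)^2$, and both sides of (iii) are comparable to $|t-1|$: the left side behaves like $2|t-1|$ and the right like $s|t-1|$. As $t\to\infty$ everything scales like $t^s$, and at $t=0$ both sides are positive constants. Checking these limits is the only real obstacle, and it is routine.

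Now assemble the estimate. Since $m^2\ge \alpha\beta$ may have either sign relative to $\delta^2/4$, I split $\alpha\beta=m^2-\delta^2/4$. Applying (i) to the $m^2$ part and (ii) to the $\delta^2/4$ part gives
\[
\alpha\beta(a-b)(a^{s-1}-b^{s-1})\ge \kappa_s m^2(A-B)^2-\tfrac{\delta^2}{4}(a^s+b^s).
\]
For the cross term, (iii) and Young's inequality give
\[
|m\delta|(a-b)(a^{s-1}+b^{s-1})
\le \tfrac{\kappa_s}{2}m^2(A-B)^2+\tfrac{K_s^2}{2\kappa_s}\delta^2(A+B)^2.
\]
I also use $(A+B)^2\le 2(a^s+b^s)$. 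Hence the left side of \eqref{eq:two-point-caccioppoli} is at least
\[
\tfrac{\kappa_s}{2}m^2(A-B)^2-C'\delta^2(a^s+b^s).
\]
Finally, $\alpha A-\beta B=m(A-B)+\tfrac{\delta}{2}(A+B)$, so
\[
(\alpha A-\beta B)^2\le 2m^2(A-B)^2+\delta^2(a^s+b^s).
\]
This gives $m^2(A-B)^2\ge \tfrac12(\alpha A-\beta B)^2-\tfrac12\delta^2(a^s+b^s)$. Inserting it yields \eqref{eq:two-point-caccioppoli} with $c_s=\kappa_s/4$ and $C_s=C'+\kappa_s/4$.
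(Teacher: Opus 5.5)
Your proof is correct and takes essentially the paper's route. After your two identities the left side is exactly the paper's $(m^2+\delta^2/4)M+m\delta Q$, with $M=(a-b)(a^{s-1}-b^{s-1})$ and $Q=(a-b)(a^{s-1}+b^{s-1})$. From there you use the same power-monotonicity bound, the same cross-term bound absorbed by Young's inequality, and the same split $\alpha A-\beta B=m(A-B)+\tfrac{\delta}{2}(A+B)$.

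The differences are cosmetic. The paper gets explicit constants for (i) and (iii) via Cauchy--Schwarz and $1-t\le 1-t^s$, where you use compactness in $t$. Your fact (ii) is superfluous, since $(m^2+\delta^2/4)M\ge m^2M$ directly. Also, the cross term should carry $|a-b|$ rather than $(a-b)$.
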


\begin{proof}
	Set
	\[
	M=(a-b)(a^{s-1}-b^{s-1}),\qquad
	Q=(a-b)(a^{s-1}+b^{s-1}),\qquad
	S=a^s+b^s.
	\]
	The power monotonicity estimate
	\begin{equation}\label{eq:power-monotonicity}
		M\ge c_s^0\bigl(a^{s/2}-b^{s/2}\bigr)^2,
		\qquad c_s^0=\frac{4(s-1)}{s^2},
	\end{equation}
	follows directly from Cauchy--Schwarz.  Indeed, after interchanging $a$
	and $b$ if necessary, assume $a\ge b$; then
	\[
	\bigl(a^{s/2}-b^{s/2}\bigr)^2
	=\frac{s^2}{4}\left(\int_b^a t^{s/2-1}\,dt\right)^2
	\le \frac{s^2}{4(s-1)}M.
	\]
	We also have
	\begin{equation}\label{eq:cross-estimate}
		|Q|\le K_s M^{1/2}S^{1/2}.
	\end{equation}
	To see this, again assume $a\ge b$.  If $a>0$, then, with $t=b/a\in[0,1]$,
	\[
	\begin{aligned}
	Q
	&=a^s(1-t)(1+t^{s-1})
	\le 2a^s(1-t)
	\le 2a^s(1-t^s)  \\
	&=2\bigl(a^{s/2}-b^{s/2}\bigr)
	\bigl(a^{s/2}+b^{s/2}\bigr).
	\end{aligned}
	\]
	The case $a=b=0$ is trivial. Hence \eqref{eq:cross-estimate} now follows
	from \eqref{eq:power-monotonicity} and
	$(a^{s/2}+b^{s/2})^2\le2S$.

	Let
	\[
	m=\frac{\alpha+\beta}{2},\qquad
	d=\frac{\alpha-\beta}{2}.
	\]
	A direct expansion gives
	\[
	(a-b)\bigl(\alpha^2a^{s-1}-\beta^2b^{s-1}\bigr)
	=(m^2+d^2)M+2mdQ.
	\]
	By \eqref{eq:cross-estimate} and Young's inequality,
	\[
	(m^2+d^2)M+2mdQ
	\ge \frac12m^2M-C_s d^2S.
	\]
	On the other hand,
	\begin{equation}\label{eq:split-square}
		\begin{aligned}
		\bigl(\alpha a^{s/2}-\beta b^{s/2}\bigr)^2
		&=\Bigl[m\bigl(a^{s/2}-b^{s/2}\bigr)
		+d\bigl(a^{s/2}+b^{s/2}\bigr)\Bigr]^2  \\
		&\le \frac{2}{c_s^0}m^2M+4d^2S.
		\end{aligned}
	\end{equation}
	In particular, \eqref{eq:split-square} implies
	\[
	\frac12m^2M\ge
	\frac{c_s^0}{4}\bigl(\alpha a^{s/2}-\beta b^{s/2}\bigr)^2
	-c_s^0d^2S.
	\]
	Combining this with the preceding lower bound and using
	$d^2=(\alpha-\beta)^2/4$ proves \eqref{eq:two-point-caccioppoli}.
\end{proof}

\begin{lemma}\label{lem:green-caccioppoli}
	Let $s>1$ and let $\psi:V\to[0,\infty)$.  Then
	\begin{equation}\label{eq:green-caccioppoli}
		\begin{aligned}
			\cE(\psi g_R^{s/2},\psi g_R^{s/2})
			&\le
			C_s\sum_{\{x,y\}\in E}\mu_{xy}\bigl(\psi(x)-\psi(y)\bigr)^2
			\bigl(g_R(x)^s+g_R(y)^s\bigr)  \\
			&\quad +C_s\psi(o)^2g_R(o)^{s-1}.
		\end{aligned}
	\end{equation}
\end{lemma}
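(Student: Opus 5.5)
We test the Green equation \eqref{eq:green-energy-identity} against $\varphi=\psi^2 g_R^{s-1}$. This function is supported in $B_R$, since $g_R=0$ outside $B_R$ and $s-1>0$. Note that $\psi$ is not assumed finitely supported, but $\varphi$ vanishes off the finite set $B_R$, so all edge sums below are finite sums over edges meeting $B_R$. The identity then gives
\[
\cE(g_R,\psi^2g_R^{s-1})=\psi(o)^2g_R(o)^{s-1}.
\]
On the other hand, expanding the left side edge by edge, each unordered edge $\{x,y\}$ contributes
\[
\mu_{xy}\bigl(g_R(x)-g_R(y)\bigr)\bigl(\psi(x)^2g_R(x)^{s-1}-\psi(y)^2g_R(y)^{s-1}\bigr).
\]
This is exactly the left side of \eqref{eq:two-point-caccioppoli} with $a=g_R(x)$, $b=g_R(y)$, $\alpha=\psi(x)$, $\beta=\psi(y)$. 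All of these are nonnegative, since $g_R\ge0$ and $\psi\ge0$.

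Applying Lemma \ref{lem:two-point-caccioppoli} on every edge and summing, we obtain
\[
c_s\,\cE(\psi g_R^{s/2},\psi g_R^{s/2})
-C_s\sum_{\{x,y\}\in E}\mu_{xy}\bigl(\psi(x)-\psi(y)\bigr)^2\bigl(g_R(x)^s+g_R(y)^s\bigr)
\le \psi(o)^2g_R(o)^{s-1}.
\]
Here we use that $(\psi g_R^{s/2})(x)-(\psi g_R^{s/2})(y)=\alpha a^{s/2}-\beta b^{s/2}$, so the squared terms sum to the energy of $\psi g_R^{s/2}$. Edges with both endpoints outside $B_R$ contribute zero to every sum, so there are no convergence issues.

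Dividing by $c_s$ and renaming the constant $\max\{C_s/c_s,1/c_s\}$ as $C_s$ gives \eqref{eq:green-caccioppoli}.

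The only delicate point is the legitimacy of the summation by parts when $\psi$ is arbitrary. Since $\varphi=\psi^2g_R^{s-1}$ is finitely supported, \eqref{eq:green-energy-identity} applies directly. Because the pointwise inequality of Lemma \ref{lem:two-point-caccioppoli} already absorbs the cross terms, no further Young-type manipulation is needed.
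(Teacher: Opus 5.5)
Your proposal is correct and follows the paper's proof essentially verbatim. You apply Lemma~\ref{lem:two-point-caccioppoli} edgewise with $a=g_R(x)$, $b=g_R(y)$, $\alpha=\psi(x)$, $\beta=\psi(y)$, sum over edges, evaluate $\cE(g_R,\psi^2g_R^{s-1})=\psi(o)^2g_R(o)^{s-1}$ via \eqref{eq:green-energy-identity} (since $\psi^2g_R^{s-1}$ is supported in $B_R$), and divide by $c_s$, with your remark on finiteness of the edge sums being a harmless clarification the paper leaves implicit.
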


\begin{proof}
	Apply Lemma \ref{lem:two-point-caccioppoli} on each edge with
	\[
	a=g_R(x),\quad b=g_R(y),\quad
	\alpha=\psi(x),\quad \beta=\psi(y).
	\]
	Summing over unordered edges gives
	\[
	\begin{aligned}
		c_s\cE(\psi g_R^{s/2},\psi g_R^{s/2})
		&\le
		\cE(g_R,\psi^2g_R^{s-1})  \\
		&\quad+C_s\sum_{\{x,y\}\in E}\mu_{xy}(\psi(x)-\psi(y))^2
		(g_R(x)^s+g_R(y)^s).
	\end{aligned}
	\]
	Since $g_R=0$ on $V\setminus B_R$ and $s>1$, both
	$\psi g_R^{s/2}$ and $\psi^2g_R^{s-1}$ are supported in $B_R$.  Hence
	\eqref{eq:green-energy-identity} gives
	\[
	\cE(g_R,\psi^2g_R^{s-1})=\psi(o)^2g_R(o)^{s-1}.
	\]
	This proves \eqref{eq:green-caccioppoli} after changing the constants.
\end{proof}

\begin{proposition}\label{prop:hardy-caccioppoli-absorption}
	Let $h:V\to(0,\infty)$ and $\lambda:V\to(0,\infty)$ satisfy
	\[
	Lh(x)\ge \lambda(x)h(x)\mu(x),
	\qquad x\in V.
	\]
	Then, for every $s>1$ and $P>0$, there is $C=C(s,P)>0$ such that, for
	all $R\ge1$,
	\begin{equation}\label{eq:linear-absorption}
		\sum_{x\in B_R}\rho(x)^{2P}g_R(x)^s\mu(x)
		\le
		C\sum_{x\in B_R}\lambda(x)^{-P}g_R(x)^s\mu(x).
	\end{equation}
\end{proposition}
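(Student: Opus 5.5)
We plan to apply the ground-state inequality \eqref{eq:ground-state-hardy} with the test function $\phi=\psi g_R^{s/2}$, where $\psi=\rho^{P}\cdot\eta$ for a weight chosen so that the Caccioppoli error terms from Lemma \ref{lem:green-caccioppoli} can be absorbed. The natural first attempt is to split pointwise according to whether $\lambda(x)\rho(x)^2$ is large or small. Write
\[
\sum_{x\in B_R}\rho^{2P}g_R^s\mu
=\sum_{\lambda\rho^2\le K}\rho^{2P}g_R^s\mu+\sum_{\lambda\rho^2> K}\rho^{2P}g_R^s\mu .
\]
On the first set, $\rho^{2P}\le K^P\lambda^{-P}$, so that part is bounded directly by the right-hand side of \eqref{eq:linear-absorption}. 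On the second set, we have $\rho^{2P}\le K^{-1}\lambda\rho^{2P+2}$. Hence it suffices to bound
\[
\sum_x \lambda\,\rho^{2P+2}g_R^s\mu .
\]
Instead, we take $\psi=\rho^{P-1}\cdot\rho=\rho^{P}$ with one factor $\rho^{-1}$ put aside. Concretely, we apply \eqref{eq:ground-state-hardy} to $\phi=\rho^{P}\,\theta\, g_R^{s/2}$, where $\theta$ is to be chosen. A cleaner route is to use Young's inequality on $\lambda$ itself: for $\delta>0$,
\[
\rho^{2P}=\bigl(\lambda\rho^{2P+2}\bigr)^{P/(P+1)}\lambda^{-P/(P+1)}\le \delta\,\lambda\rho^{2P+2}\cdot\rho^{-2}\cdot(\ldots),
\]
but the simplest exponent bookkeeping is as follows. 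Young with exponents $(P+1)/P$ and $P+1$ gives
\[
\rho^{2P}=\bigl(\lambda\rho^{2P}\cdot\rho^{-2}\rho^{2}\bigr)^{\frac{P}{P+1}}\lambda^{-\frac{P}{P+1}}\cdots,
\]
so we aim for the intermediate estimate
\begin{equation*}
\sum_{x}\lambda\,\rho^{2P-2}\cdot\rho^{2}g_R^s\mu\;\lesssim\;\text{(stuff)}.
\end{equation*}

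The core step is to apply \eqref{eq:ground-state-hardy} with $\phi=\psi g_R^{s/2}$ and $\psi=\rho^{P}$, which is admissible since $g_R$ is finitely supported. This gives $\sum\lambda\rho^{2P}g_R^s\mu\le\cE(\psi g_R^{s/2},\psi g_R^{s/2})$. Then Lemma \ref{lem:green-caccioppoli} bounds the right side by
\[
C_s\sum_{\{x,y\}}\mu_{xy}(\rho^P(x)-\rho^P(y))^2(g_R(x)^s+g_R(y)^s)+C_s g_R(o)^{s-1}.
\]
Since $|\rho(x)-\rho(y)|\le1$ and $\rho(x)\le 2\rho(y)$ on edges, we have $(\rho^P(x)-\rho^P(y))^2\le C\rho(x)^{2P-2}$, and summing over edges (using $\sum_{y}\mu_{xy}=\mu(x)$) yields
\[
\sum_x\lambda\rho^{2P}g_R^s\mu\le C\sum_x\rho^{2P-2}g_R^s\mu+C g_R(o)^{s-1}.
\]
Next we combine this with Young's inequality. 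Pointwise,
\[
\rho^{2P-2}\;\le\;\varepsilon\,\lambda\rho^{2P}+C_\varepsilon\,\lambda^{-(P-1)}\quad(\text{Young with exponents }P/(P-1),\,P),
\]
and similarly
\[
\rho^{2P}\le \varepsilon\lambda\rho^{2P+2}\cdot\rho^{-2}\ldots
\]
More usefully, we run the same argument with $\psi=\rho^{P}$ replaced by the family $\rho^{k}$ and iterate. The cleanest closed form is to apply Hölder directly:
\[
\sum\rho^{2P}g^s\mu\le\Bigl(\sum\lambda\rho^{2P+2}g^s\mu\Bigr)^{\frac{P}{P+1}}\Bigl(\sum\lambda^{-P}g^s\mu\Bigr)^{\frac1{P+1}}.
\]
Applying the Hardy/Caccioppoli step with $\psi=\rho^{P+1}$ bounds the first factor by $C\sum\rho^{2P}g^s\mu+Cg_R(o)^{s-1}$. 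This gives an inequality of the form $X\le C(X+g_R(o)^{s-1})^{P/(P+1)}Y^{1/(P+1)}$, hence $X\le C(Y+g_R(o)^{s-1})$, where $X$ and $Y$ denote the left and right sums in \eqref{eq:linear-absorption}. Since $X$ is finite for each $R$, the absorption is legitimate.

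The main obstacle is the leftover term $g_R(o)^{s-1}$: we must show it is dominated by $Y$. Here we use the single point $x=o$ in $Y$, which gives $Y\ge\lambda(o)^{-P}g_R(o)^s\mu(o)$. We also need a lower bound $g_R(o)\ge g_1(o)=1/\mu(o)$, which follows from monotonicity of $g_R$ in $R$ by the maximum principle. Then $g_R(o)^{s-1}\le g_R(o)^s\mu(o)$, so this term is at most $\lambda(o)^{P}Y$. Thus the constant depends on $\lambda(o)$ as well. If a constant depending only on $(s,P)$ is required, we would instead pick $\psi$ vanishing at $o$, for instance $\psi=\rho^{P+1}-1$. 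This kills the boundary term entirely and changes the edge differences only by bounded factors, so we would try this choice first.
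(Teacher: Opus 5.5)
Your final argument is the paper's argument, and it is correct up to $X\le C_{s,P}\bigl(Y+g_R(o)^{s-1}\bigr)$. That argument is: $\psi=\rho^{P+1}$ in \eqref{eq:ground-state-hardy}, then Lemma~\ref{lem:green-caccioppoli}, then H\"older (the summed form of the paper's pointwise Young inequality) and absorption using $X<\infty$.

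\textbf{The gap is the last step.} You get $g_R(o)^{s-1}\le\lambda(o)^P\,Y$ and conclude that the constant depends on $\lambda(o)$. That does not prove the proposition, which requires $C=C(s,P)$. In Proposition~\ref{prop:system-upper-endpoint} it would also add a dependence on further values of the solution at $o$. The missing observation is that $\lambda<1$ at every vertex. Since $h>0$ and every vertex has a neighbour,
\[
\lambda(x)h(x)\mu(x)\le Lh(x)=\mu(x)h(x)-\sum_{y\sim x}\mu_{xy}h(y)<\mu(x)h(x).
\]
Hence $\lambda(o)^P<1$, and
\[
g_R(o)^{s-1}\le\mu(o)g_R(o)^s\le\lambda(o)^{-P}\mu(o)g_R(o)^s\le Y,
\]
which is exactly how the paper closes the proof.

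\textbf{Your proposed remedy $\psi=\rho^{P+1}-1$ does not fix this; it only moves the problem.} With $\psi(o)=0$, the pole term in Lemma~\ref{lem:green-caccioppoli} disappears. But the $x=o$ summand $\lambda(o)\psi(o)^2g_R(o)^s\mu(o)$ on the Hardy side also vanishes. So your H\"older/Young step no longer controls the $x=o$ summand $\rho(o)^{2P}g_R(o)^s\mu(o)=\mu(o)g_R(o)^s$ of $X$. The only bound your argument has for it is again $\mu(o)g_R(o)^s\le\lambda(o)^P\,Y$. So you need $\lambda(o)<1$ in either version.

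A minor point: $g_1(o)=1/\mu(o)$ is false, because $g_1>0$ at the neighbours of $o$ forces $\mu(o)g_1(o)>1$. The bound you actually need, $g_R(o)\ge1/\mu(o)$, follows directly from $Lg_R(o)=1$ and $g_R\ge0$, without monotonicity in $R$.
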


\begin{proof}
	Set
	\[
	\psi(x)=\rho(x)^{P+1}=(1+d(o,x))^{P+1},\qquad x\in V.
	\]
	This function is defined on all of $V$, so the same edge estimate applies
	across the Dirichlet boundary of $B_R$.  Apply
	\eqref{eq:ground-state-hardy} to $\phi=\psi g_R^{s/2}$ and then use
	Lemma \ref{lem:green-caccioppoli}.  Since $\rho$ is $1$-Lipschitz on
	edges and $x\sim y$ implies $\rho(x)\asymp\rho(y)$,
	\[
	|\psi(x)-\psi(y)|^2
	\le C_P\bigl(\rho(x)^{2P}+\rho(y)^{2P}\bigr),
	\qquad x\sim y.
	\]
	Consequently,
	\[
	\begin{aligned}
	&\bigl(\psi(x)-\psi(y)\bigr)^2
	\bigl(g_R(x)^s+g_R(y)^s\bigr)  \\
	&\qquad\le C_P\Bigl(
	\rho(x)^{2P}g_R(x)^s+\rho(y)^{2P}g_R(y)^s\Bigr),
	\qquad x\sim y.
	\end{aligned}
	\]
	After summing over unordered edges and using
	$\sum_{y\sim x}\mu_{xy}=\mu(x)$, the error term in
	\eqref{eq:green-caccioppoli} is bounded by
	\[
	C_{s,P}\sum_{x\in B_R}\rho(x)^{2P}g_R(x)^s\mu(x).
	\]
	Since $\psi(o)=1$, we obtain
	\begin{equation}\label{eq:pre-absorption}
		\sum_{x\in B_R}\lambda(x)\rho(x)^{2P+2}g_R(x)^s\mu(x)
		\le
		C\sum_{x\in B_R}\rho(x)^{2P}g_R(x)^s\mu(x)
		+Cg_R(o)^{s-1}.
	\end{equation}

	For every $t\ge0$ and every $\eps>0$, Young's inequality gives
	\begin{equation*}
		t^P\le \eps t^{P+1}+C_P\eps^{-P}.
	\end{equation*}
	Taking $t=\lambda(x)\rho(x)^2$ and multiplying by $\lambda(x)^{-P}$,
	we obtain
	\[
	\rho(x)^{2P}
	\le
	\eps\lambda(x)\rho(x)^{2P+2}
	+C_P\eps^{-P}\lambda(x)^{-P}.
	\]
	Multiplying by $g_R(x)^s\mu(x)$, summing over $B_R$, and using
	\eqref{eq:pre-absorption}, we get
	\[
	\begin{aligned}
	\sum_{x\in B_R}\rho(x)^{2P}g_R(x)^s\mu(x)
	&\le
	\eps C\sum_{x\in B_R}\rho(x)^{2P}g_R(x)^s\mu(x)\\
	&\quad+C\eps g_R(o)^{s-1}
	+C_P\eps^{-P}
	\sum_{x\in B_R}\lambda(x)^{-P}g_R(x)^s\mu(x).
	\end{aligned}
	\]
	Choose $\eps>0$ so small that the first term on the right is absorbed
	into the left.  It remains to control the pole term.  Since $h>0$,
	\[
	\lambda(o)h(o)\mu(o)
	\le Lh(o)
	=\mu(o)h(o)-\sum_{y\sim o}\mu_{oy}h(y)
	<\mu(o)h(o),
	\]
	and therefore $\lambda(o)<1$.  Moreover, since $Lg_R(o)=1$ and
	$g_R\ge0$,
	\[
	1=\mu(o)g_R(o)-\sum_{y\sim o}\mu_{oy}g_R(y)
	\le\mu(o)g_R(o),
	\]
	so $g_R(o)\ge1/\mu(o)$.  Hence
	\[
	\begin{aligned}
	g_R(o)^{s-1}
	&\le \mu(o)g_R(o)^s
	\le \lambda(o)^{-P}\mu(o)g_R(o)^s  \\
	&\le \sum_{x\in B_R}\lambda(x)^{-P}g_R(x)^s\mu(x).
	\end{aligned}
	\]
	This proves \eqref{eq:linear-absorption}.
\end{proof}
\section{A uniform upper bound for the weighted Green mass}\label{upp-bound}
This section combines the weighted Green estimate in Proposition
\ref{prop:hardy-caccioppoli-absorption} with Picone testing for the two
inequalities to show that, assuming the existence of a nontrivial nonnegative solution of the
asymmetric system, $M_{pq,P}(R)$ is bounded uniformly in $R$.
\begin{proposition}\label{prop:system-upper-endpoint}
	Assume $p\ne q$, $pq>1$, and let $(u,v)$ be a nontrivial nonnegative solution of
	\eqref{eq:system-normalized}.  Set
	\[
	P=\max\{p,q\},
	\qquad s=pq.
	\]
	Then
	\begin{equation*}
		\sup_{R\ge1}M_{pq,P}(R)<\infty.
	\end{equation*}
	The upper bound is independent of $R$.  When $q>p$, it depends only on
	$p,q,\mu(o)$, and $v(o)$; when $p>q$, it depends only on
	$p,q,\mu(o)$, and $u(o)$.
\end{proposition}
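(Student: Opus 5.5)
The plan is to apply Proposition \ref{prop:hardy-caccioppoli-absorption} with $s=pq$, taking $h$ to be the component of the solution whose equation carries the larger exponent. The potential $\lambda$ read off from the system is then the natural one, and the resulting weighted sum $\sum\lambda^{-P}g_R^{pq}\mu$ is controlled by the two Picone tests \eqref{eq:test-u}--\eqref{eq:test-v}. By Lemma \ref{lem:zero-propagation}, $u,v>0$ everywhere, so all quotients below make sense. By the $p\leftrightarrow q$ symmetry of the statement (swap $u$ and $v$), it suffices to treat $q>p$, so $P=q$.

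\emph{Step 1 (reduction to a potential sum).} Take $h=v$ and $\lambda=u^q/v$, so that $Lv\ge \mu u^q=\lambda h\mu$. Proposition \ref{prop:hardy-caccioppoli-absorption} with $s=pq$ gives
\[
M_{pq,q}(R)\le C\sum_{x\in B_R}v(x)^{q}u(x)^{-q^2}g_R(x)^{pq}\mu(x)=:C\,T_R .
\]
The alternative choice $h=u$, $\lambda=v^p/u$ would give instead the weight $u^{q}v^{-pq}$. I will keep both options open, and use whichever makes the exponent bookkeeping in Step 2 feasible.

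\emph{Step 2 (interpolating $T_R$).} The two test sums are
\[
X_A=\sum v^pu^{-A}g_R^A\mu,\qquad Y_B=\sum u^qv^{-B}g_R^B\mu,
\]
with free parameters $A,B>1$. I want to write the summand of $T_R$ as a product $(v^pu^{-A}g_R^A)^{\theta_1}(u^qv^{-B}g_R^B)^{\theta_2}Z^{\theta_3}$, where $\theta_1+\theta_2+\theta_3=1$ and $Z$ is either the summand of $T_R$ itself or $\rho^{2q}g_R^{pq}$. Hölder's inequality then gives
\[
T_R\le X_A^{\theta_1}Y_B^{\theta_2}(T_R\ \text{or}\ M)^{\theta_3}.
\]
Matching the exponents of $v$, $u$, $g_R$ is a linear system in $(A,B,\theta_i)$. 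A naive two-factor attempt without $Z$ forces $\theta=q$, which is illegitimate when $q\ge1$. This is exactly why a third, self-referential factor is needed, to be absorbed on the left since $\theta_3<1$. The asymmetry $p\ne q$ should be what makes this system nondegenerate. The choice between the two potentials in Step 1, and the role of $P=\max\{p,q\}$, should be dictated by the requirement $\theta_i\in(0,1)$ and $A,B>1$.

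\emph{Step 3 (eliminating $g_R(o)$).} After absorption and using \eqref{eq:test-u}--\eqref{eq:test-v}, one expects a bound of the form
\[
T_R\le C\,u(o)^{-a}v(o)^{-b}g_R(o)^{\gamma}.
\]
The quantity $g_R(o)$ must disappear, because it may grow with $R$. If $\gamma\le0$, I will use $g_R(o)\ge 1/\mu(o)$ (shown in the proof of Proposition \ref{prop:hardy-caccioppoli-absorption}). If $\gamma>0$, I will instead compare with the lower bound $M_{pq,q}(R)\ge \mu(o)g_R(o)^{pq}$ and absorb, which requires $\gamma<pq$. Finally, to express the bound through $v(o)$ alone, as claimed, I will use $Lv(o)\ge\mu(o)u(o)^q$ together with $Lv(o)<\mu(o)v(o)$, which gives $u(o)^q<v(o)$.

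\emph{Main obstacle.} The crux is Step 2: finding admissible Hölder exponents with $A,B>1$ and all $\theta_i\in(0,1)$, and simultaneously getting the resulting power $\gamma$ of $g_R(o)$ into a range that Step 3 can handle. If the direct interpolation fails, my fallback is to iterate. I would first use \eqref{eq:test-u} to bound a sum with weight $v^p$ by one with $u$-weights, and then feed the result into \eqref{eq:test-v}, choosing $A$ and $B$ sequentially rather than jointly.
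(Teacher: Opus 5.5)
\textbf{There is a genuine gap: Step~1 chooses the wrong $h$, and the H\"older interpolation in Step~2 that is meant to repair it has no solution.} With $q>p$, $h=v$, $\lambda=u^q/v$ and weight exponent $P=q$, the summand of $T_R$ is $v^qu^{-q^2}g_R^{pq}$. Try to dominate it pointwise by $X_A^{\theta_1}Y_B^{\theta_2}(\rho^{2q}g_R^{pq})^{\theta_3}$, dropping $\rho\ge1$. Matching the power of $v$ then requires $q=p\theta_1-B\theta_2$. This is impossible, because $p\theta_1\le p<q$ and $B\theta_2\ge0$.

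Now take the self-referential factor $T_R^{\theta_3}$ instead, with $\theta_1+\theta_2=1-\theta_3$. The $v$-balance becomes $(q-p)\theta_1+(q+B)\theta_2=0$, which forces $\theta_1=\theta_2=0$. So the "crux" you identify is infeasible, and the fallback iteration is left unspecified. There is also a smaller problem in Step~3: $u(o)^q<v(o)$ bounds $u(o)$ from above. That is the wrong direction for removing a factor $u(o)^{-a}$, $a>0$, from an upper bound; you would need $v(o)^p<u(o)$ instead.

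\textbf{The fix is already in your text.} Your ``alternative choice'' $h=u$, $\lambda=v^p/u$ gives
\[
\lambda^{-q}g_R^{pq}=u^q\left(\frac{g_R}{v}\right)^{pq}.
\]
This is exactly the summand of \eqref{eq:test-v} with $B=pq>1$, so no interpolation is needed. Proposition~\ref{prop:hardy-caccioppoli-absorption} (with $P=q$, $s=pq$) combined with \eqref{eq:test-v} gives $M_{pq,q}(R)\le C_{p,q}\,v(o)^{1-pq}g_R(o)^{pq-1}$. Thus $\gamma=pq-1\in(0,pq)$. Your Step~3 absorption against $M_{pq,q}(R)\ge\mu(o)g_R(o)^{pq}$ then yields $g_R(o)\le C_{p,q}/(\mu(o)v(o)^{pq-1})$. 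This gives the uniform bound, depending only on $p,q,\mu(o),v(o)$, and it is the paper's proof.

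The general rule is as follows. The choice $h=u$ pairs naturally with weight exponent $q$, through the $v$-test. The choice $h=v$ pairs with weight exponent $p$, since $(u^q/v)^{-p}g_R^{pq}=v^p(g_R/u)^{pq}$ is the $u$-test with $A=pq$. To reach $P=\max\{p,q\}$ you must therefore take $h$ to be the component whose equation carries the \emph{smaller} exponent, which is the opposite of your heuristic.
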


\begin{proof}
	By Lemma \ref{lem:zero-propagation}, $u,v>0$ everywhere.  First assume
	$q>p$, so $P=q$, and set
	\[
	\lambda=\frac{v^p}{u}.
	\]
	Then the $u$-inequality gives
	\[
	Lu\ge v^p\mu=\lambda u\mu.
	\]
	Applying Proposition \ref{prop:hardy-caccioppoli-absorption} with
	$h=u$, $P=q$, and $s=pq$, we obtain
	\begin{equation*}
		M_{pq,q}(R)
		\le
		C_{p,q}\sum_{x\in B_R}\lambda(x)^{-q}g_R(x)^{pq}\mu(x).
	\end{equation*}
	Moreover,
	\[
	\lambda^{-q}g_R^{pq}
	=\left(\frac{u}{v^p}\right)^qg_R^{pq}
	=u^q\left(\frac{g_R}{v}\right)^{pq}.
	\]
	Since $pq>1$, estimate \eqref{eq:test-v} is applicable with $B=pq$ and gives
	\[
	\sum_{x\in B_R}u(x)^q\left(\frac{g_R(x)}{v(x)}\right)^{pq}\mu(x)
	\le
	\frac{pq}{pq-1}\left(\frac{g_R(o)}{v(o)}\right)^{pq-1}.
	\]
	Therefore
	\begin{equation}\label{eq:q-p-upper-2}
		M_{pq,q}(R)
		\le C_{p,q}v(o)^{1-pq}g_R(o)^{pq-1}.
	\end{equation}
	On the other hand, $\rho(o)=1$, so
	\[
	M_{pq,q}(R)\ge \mu(o)g_R(o)^{pq}.
	\]
	Combining this with \eqref{eq:q-p-upper-2} and dividing by the positive
	quantity $g_R(o)^{pq-1}$ yields
	\[
	g_R(o)\le \frac{C_{p,q}}{\mu(o)v(o)^{pq-1}},
	\]
	uniformly in $R$.  Substitution into \eqref{eq:q-p-upper-2} proves the
	desired uniform bound for $M_{pq,q}(R)$.
	
	Now assume $p>q$, so $P=p$, and set
	\[
	\lambda=\frac{u^q}{v}.
	\]
	The $v$-inequality gives $Lv\ge\lambda v\mu$.  Applying Proposition
	\ref{prop:hardy-caccioppoli-absorption} with $h=v$, $P=p$, and $s=pq$ gives
	\[
	M_{pq,p}(R)
	\le
	C_{p,q}\sum_{x\in B_R}\lambda(x)^{-p}g_R(x)^{pq}\mu(x).
	\]
	Here
	\[
	\lambda^{-p}g_R^{pq}
	=v^p\left(\frac{g_R}{u}\right)^{pq}.
	\]
	Since $pq>1$, estimate \eqref{eq:test-u} with $A=pq$ yields
	\begin{equation}\label{eq:p-q-upper-2}
	M_{pq,p}(R)
	\le C_{p,q}u(o)^{1-pq}g_R(o)^{pq-1}.
	\end{equation}
	Together with $M_{pq,p}(R)\ge\mu(o)g_R(o)^{pq}$, this gives
	\[
	g_R(o)\le \frac{C_{p,q}}{\mu(o)u(o)^{pq-1}},
	\]
	uniformly in $R$.  Substitution into \eqref{eq:p-q-upper-2} completes the proof.
\end{proof}

\section{Weighted lower bound via flow decomposition}\label{lbdd}

The finite Green voltage $g_R$ induces a unit electrical current on the network
obtained by collapsing $V\setminus B_R$ to one boundary vertex.  We orient every
nonzero current edge from higher to lower Green voltage.  The voltage then decreases
strictly along directed edges, so the resulting flow is already acyclic and no cycle
pruning is needed.  The classical flow decomposition theorem represents this current
by a probability measure on directed root-to-boundary paths.  Weighted one-dimensional
Hardy estimates along these paths, chronological first exits, and a parallel-sum
argument then give the lower bound for $M_{s,P}(R)$.

For $k\ge0$, let $b_k$ denote the total weight of the edges crossing the
boundary of $B_k$:
\begin{equation}\label{eq:bk}
	b_k=
	\sum_{\substack{x\in B_k,\ y\notin B_k\\x\sim y}}\mu_{xy}.
\end{equation}
Since the graph is infinite and connected, $b_k>0$.

\begin{proposition}\label{prop:weighted-flow-lower}
	For every $s>1$, every $P\ge0$, and every $R\ge1$,
	\begin{equation}
		M_{s,P}(R)
		\ge
		c_{s,P}\sum_{n=1}^{R}
		n^{2P+1}
		\left(\sum_{k=n}^{R}\frac1{b_k}\right)^{s-1},
	\end{equation}
	where $c_{s,P}>0$ depends only on $s$ and $P$.
\end{proposition}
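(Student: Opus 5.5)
The plan is to turn the finite Green current into a probability measure on paths and estimate $M_{s,P}(R)$ path by path. Write $T_n=\sum_{k=n}^{R}b_k^{-1}$, and let
\[
i(x,y)=\mu_{xy}\bigl(g_R(x)-g_R(y)\bigr).
\]
By \eqref{eq:green-equation}, $i$ is a unit flow from $o$ to the collapsed boundary $V\setminus B_R$. Orient each edge with $i\ne0$ from higher to lower voltage. Then $g_R$ strictly decreases along directed edges, so the flow is acyclic. The classical flow decomposition theorem gives a probability measure $\nu$ on directed paths $\gamma=(o=x_0,x_1,\dots,x_m)$, with $x_m\notin B_R$, such that the flow on each directed edge $e$ equals $\nu\{\gamma\ni e\}$. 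For a path $\gamma$, let $D_k(\gamma)$ be the total voltage drop of $\gamma$ on edges crossing $\partial B_k$ (in either direction). Since each such edge is traversed at most once and drops are positive,
\[
g_R(x_j)\ge\sum_{\text{edges of }\gamma\text{ after }x_j}\text{drop}\ge\sum_{k\ge n}D_k(\gamma)
\]
whenever $x_j$ is the chronological first vertex of $\gamma$ with $d(o,x_j)=n$. This holds because every crossing of $\partial B_k$ with $k\ge n$ happens after the first exit from $B_{n-1}$.

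The parallel-sum step bounds the averaged drops. For fixed $k$,
\[
\int D_k\,\dd\nu=\sum_{e\text{ crossing }\partial B_k}\frac{|i_e|^2}{\mu_e}\ge\frac{\bigl(\sum|i_e|\bigr)^2}{b_k}\ge\frac1{b_k},
\]
by Cauchy--Schwarz, since the net current through the cut is $1$. Hence the first-hit voltage $G_n(\gamma):=g_R(y_\gamma(n))$ satisfies $\int G_n\,\dd\nu\ge T_n$.

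To return to the vertex mass, note that the $\nu$-probability that $y_\gamma(n)=y$ is at most the flow through $y$. That flow is at most $\sum_z\mu_{yz}(g_R(y)-g_R(z))^+\le\mu(y)g_R(y)$. Therefore
\[
\sum_{d(o,y)=n}\rho(y)^{2P}g_R(y)^s\mu(y)\ge (n+1)^{2P}\int G_n^{s-1}\,\dd\nu .
\]

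The main obstacle is converting these averaged, pathwise bounds into the stated form with exponent $s-1$ on $T_n$ and the extra factor $n$ (so $n^{2P+1}$ rather than $n^{2P}$). When $s\ge2$, Jensen gives $\int G_n^{s-1}\,\dd\nu\ge T_n^{s-1}$, but this yields only $\sum_n n^{2P}T_n^{s-1}$. A naive dyadic regrouping does not supply the missing factor $n$, because each shell would be counted about $n$ times. I would instead keep the pathwise structure and apply, on each path, a one-dimensional weighted Hardy inequality. The inequality I have in mind is of the form
\[
\sum_{m}m^{2P}\Bigl(\sum_{k\ge m}D_k\Bigr)^{s-1}\cdot(\text{mass along }\gamma\text{ at level }m)\gtrsim\sum_n n^{2P+1}\Bigl(\sum_{k\ge n}D_k\Bigr)^{s-1}\cdots .
\]
The extra factor $n$ would come from using all vertices of $\gamma$ between its first hits of levels $\lfloor n/2\rfloor$ and $n$, each with voltage at least $\sum_{k\ge n}D_k$, rather than only the first hit. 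Only after this would I average over $\nu$. I would then handle the concavity issue for $1<s<2$ with a reverse-H\"older/Cauchy--Schwarz step on the cuts, in place of Jensen, which fails there. I am least certain about this last step and would expect to need the sharper per-edge identity $\mu_e\,\text{drop}_e^2=|i_e|\,\text{drop}_e$ to make the exponents match.
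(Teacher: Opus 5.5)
There is a genuine gap, and it sits at the step you call the main obstacle. Your passage to the vertex mass discards the factor that produces the extra power of $n$, and no regrouping over levels can recover it. The bound $\nu\{\gamma\text{ visits }y\}\le\mu(y)g_R(y)$ effectively replaces the pathwise weight $g_R^s/\delta$ by $g_R^{s-1}$.

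Take the half-line $\N_0$ with unit conductances. There $g_R(x)=R+1-x$, $b_k=1$, $T_n=R-n+1$, and $\nu$ is the point mass on the unique path. Both $M_{s,P}(R)$ and the right-hand side of the proposition are of order $R^{2P+s+1}$. Your bound $\sum_m(m+1)^{2P}\int G_m^{s-1}\,\dd\nu$ is only of order $R^{2P+s}$, even though it already uses every vertex of the path: the visit probability is $1$, while $\mu(y)g_R(y)=2(R+1-y)$.

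The missing idea is to weight edges rather than vertices. Since $\mathbb P(\gamma\ni e)=\theta_e=\mu^{(R)}_e\delta_e$, and the retained outgoing conductances at $x$ sum to at most $\mu(x)$,
\[
M_{s,P}(R)\ge\mathbb E_\gamma\sum_{i}\rho(x_i)^{2P}\frac{g_R(x_i)^s}{\delta_i}.
\]
Now keep only the chronologically ordered first-exit edges $\alpha_k$ from $B_k$. Their inner endpoints $z_k$ satisfy $\rho(z_k)=k+1$ and $g_R(z_k)\ge H_k^*=\sum_{j=k}^R\delta_j^*$. On each path, apply the tail Hardy inequality
\[
\sum_{k=1}^R k^{2P}\frac{(H_k^*)^s}{\delta_k^*}\ge c_{s,P}\sum_{n=1}^R n^{2P+1}(H_n^*)^{s-1}.
\]
Its gain of one power of $n$ comes from Cauchy--Schwarz on the block $I_m=[m/2,m)$, namely $\sum_{i\in I_m}1/a_i\ge|I_m|^2/\sum_{i\in I_m}a_i$ with $a_i=\delta_i^*/(H_i^*)^s$. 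This is combined with $\sum_{i<m}a_i\le(H_m^*)^{1-s}/(s-1)$. Your sketched inequality with ``mass along $\gamma$ at level $m$'' never places a drop in a denominator, so it cannot produce this gain. The identity $\mu_e\delta_e^2=|i_e|\delta_e$ you mention is the right ingredient, but it has to enter through this edge representation.

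The second gap is the averaging when $1<s<2$. Your estimate $\int D_k\,\dd\nu\ge1/b_k$ bounds an arithmetic mean. That controls $\int(\sum_{k\ge n}D_k)^{s-1}\,\dd\nu$ only through Jensen for $s\ge2$, and no Cauchy--Schwarz argument on the cuts in that direction handles concave powers.

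What works for every $s>1$ is a bound on reciprocal drops:
\begin{enumerate}
\item Since $\mathbb P(\alpha_k=e)\le\theta_e$, one gets $\mathbb E_\gamma(1/\delta_k^*)\le\sum_e\theta_e/\delta_e\le b_k$.
\item The parallel-sum map $F(y)=(\sum_{k=n}^R y_k^{-1})^{-1}$ is increasing and concave. Jensen then yields $\mathbb E_\gamma(H_n^*)^{-1}\le F(\mathbb E_\gamma(1/\delta_n^*),\dots,\mathbb E_\gamma(1/\delta_R^*))\le T_n^{-1}$.
\item H\"older gives $\mathbb E X^r\ge(\mathbb E X^{-1})^{-r}$ for every $r>0$, hence $\mathbb E_\gamma(H_n^*)^{s-1}\ge T_n^{s-1}$.
\end{enumerate}
Combined with the pathwise Hardy step above, this proves the proposition. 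Your proposal as written establishes only $M_{s,P}(R)\ge c\sum_n n^{2P}T_n^{s-1}$, and only for $s\ge2$.
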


The flow decomposition theorem for finite acyclic flows used below is classical.  The
weighted estimate in Proposition \ref{prop:weighted-flow-lower}, including its
first-exit and parallel-sum steps, is proved in full in the following three
subsections.

\subsection{Flow decomposition of the Green current}

Collapse $V\setminus B_R$ to one boundary vertex $\partial$.  Conductances inside $B_R$
are unchanged, and the conductance from $x\in B_R$ to $\partial$ is the sum of the
conductances from $x$ to $V\setminus B_R$.  Denote the conductance of an edge $e$
in the collapsed network by $\mu_e^{(R)}$; when $e=\{x,y\}$, we also write
$\mu_{xy}^{(R)}$.  Set $g_R(\partial)=0$.
Orient each edge with nonzero voltage drop
from larger $g_R$ to smaller $g_R$, and discard zero-drop edges.  For an oriented retained
edge $e=(x,y)$, set
\[
\delta_e=g_R(x)-g_R(y)>0,
\qquad
\theta_e=\mu_e^{(R)}\delta_e.
\]
For $x\in B_R$, the divergence of this directed current is
\[
\begin{aligned}
\operatorname{div}\theta(x)
&=\sum_{e=(x,y)}\theta_e-\sum_{e=(y,x)}\theta_e  \\
&=\sum_y\mu_{xy}^{(R)}\bigl(g_R(x)-g_R(y)\bigr)
=Lg_R(x)=\1_{\{o\}}(x).
\end{aligned}
\]
Since the total divergence on the finite collapsed network is zero,
$\operatorname{div}\theta(\partial)=-1$.  Thus the total flow strength is one.
Moreover, $g_R$ decreases strictly along every retained directed edge, so no directed
cycle is possible.  Hence $\theta$ is an acyclic unit flow from $o$ to $\partial$.
The classical flow decomposition theorem for finite acyclic flows therefore yields a
probability measure on finite directed paths
\[
\gamma=(x_0=o,e_0,x_1,e_1,\dots,e_{m-1},x_m=\partial)
\]
such that
\begin{equation}\label{eq:path-probability}
	\mathbb P(\gamma\hbox{ uses }e)=\theta_e
\end{equation}
for every retained edge $e$; see \cite{FFbook}.

For a sampled path, set
\[
\delta_i=g_R(x_i)-g_R(x_{i+1})=\delta_{e_i}>0.
\]

\begin{lemma}\label{lem:weighted-mass-domination}
	For every $s>1$ and $P\ge0$,
	\begin{equation*}
		M_{s,P}(R)
		\ge
		\mathbb E_\gamma\sum_{i=0}^{m-1}
		\rho(x_i)^{2P}\frac{g_R(x_i)^s}{\delta_i}.
	\end{equation*}
\end{lemma}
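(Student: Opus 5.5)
The plan is to exchange the expectation and the path sum, reorganizing by oriented edges, and then to regroup the edges by their tail vertex. Since the paths are directed and $g_R$ strictly decreases along them, no path visits a vertex twice, so each retained edge appears at most once in a given path. Linearity of expectation together with \eqref{eq:path-probability} then gives
\[
\mathbb E_\gamma\sum_{i=0}^{m-1}\rho(x_i)^{2P}\frac{g_R(x_i)^s}{\delta_i}
=\sum_{e=(x,y)}\theta_e\,\rho(x)^{2P}\frac{g_R(x)^s}{\delta_e},
\]
where the sum runs over all retained oriented edges. The tail $x$ of every such edge lies in $B_R$, because $\partial$ has the minimal voltage $0$ and is never a tail.

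The key step is the cancellation of $\delta_e$ against $\theta_e=\mu^{(R)}_e\delta_e$. This turns the right-hand side into
\[
\sum_{x\in B_R}\rho(x)^{2P}g_R(x)^s\sum_{e=(x,y)\ \text{retained}}\mu^{(R)}_{xy}.
\]
The inner sum is at most the total collapsed conductance at $x$. This total equals $\sum_{y\sim x}\mu_{xy}=\mu(x)$, because collapsing $V\setminus B_R$ only merges the edges from $x$ to the outside into one edge to $\partial$, with summed conductance. The retained outgoing edges are a subset of these, and all terms are nonnegative. Hence the whole expression is at most $\sum_{x\in B_R}\rho(x)^{2P}g_R(x)^s\mu(x)=M_{s,P}(R)$.

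The only point requiring care is the justification of the interchange. The path measure is a finite probability measure, since the decomposition of a finite acyclic flow uses finitely many paths, so everything is a finite sum. The identity $\mathbb E[\#\{i:e_i=e\}]=\mathbb P(\gamma\text{ uses }e)$ also needs the fact that each edge occurs at most once per path, which I will justify by the strict monotonicity of $g_R$ along directed paths. Beyond this, I expect no real obstacle: the lemma is essentially a bookkeeping identity followed by the trivial bound of outgoing conductance by $\mu(x)$.
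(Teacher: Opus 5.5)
Your proposal is correct and follows essentially the same route as the paper: rewrite the expectation as an edge sum via $\mathbb P(\gamma\text{ uses }e)=\theta_e$, cancel $\delta_e$ against $\theta_e=\mu_e^{(R)}\delta_e$, and bound the retained outgoing collapsed conductance at each $x\in B_R$ by $\mu(x)$. Your extra remarks are bookkeeping that the paper leaves implicit: each edge occurs at most once per path because $g_R$ strictly decreases, tails lie in $B_R$, and the decomposition is finite.
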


\begin{proof}
	Using \eqref{eq:path-probability} and $\delta_e=\theta_e/\mu_e^{(R)}$,
	\[
	\begin{aligned}
		\mathbb E_\gamma\sum_i\rho(x_i)^{2P}\frac{g_R(x_i)^s}{\delta_i}
		&=\sum_{e=(x,y)}\theta_e\rho(x)^{2P}\frac{g_R(x)^s}{\delta_e}  \\
		&=\sum_{e=(x,y)}\mu_e^{(R)}\rho(x)^{2P}g_R(x)^s.
	\end{aligned}
	\]
	For a fixed $x\in B_R$, the sum of the conductances of retained outgoing edges from $x$
	is at most the full vertex weight $\mu(x)$.  Therefore the last display is bounded by
	\[
	\sum_{x\in B_R}\rho(x)^{2P}g_R(x)^s\mu(x)=M_{s,P}(R).
	\]
\end{proof}

\subsection{Weighted one-dimensional Hardy estimate}

\begin{lemma}\label{lem:weighted-hardy-copson}
	Let $\alpha\ge0$, $N\ge2$, and $a_1,\dots,a_N>0$.  Put
	$A_m=\sum_{i=1}^m a_i$.  Then
	\begin{equation*}
		\sum_{i=1}^{N}\frac{i^\alpha}{a_i}
		\ge
		c_\alpha\sum_{m=2}^{N}\frac{m^{\alpha+1}}{A_{m-1}}.
	\end{equation*}
\end{lemma}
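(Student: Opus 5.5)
The plan is a dyadic block comparison: bound each dyadic piece of the right-hand side by the corresponding dyadic piece of the left-hand side, using only Cauchy--Schwarz on each block and the monotonicity of $A_m$ in $m$.

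\textbf{Covering the right-hand side.} For $j\ge0$ set
\[
I_j=\{i\in\N:\ 2^{j-1}<i\le 2^j\},
\]
so that $I_0=\{1\}$ and $|I_j|=\max\{1,2^{j-1}\}$. Every $m\in\{2,\dots,N\}$ lies in exactly one range $2^j<m\le 2^{j+1}$ with $j\ge0$. For such $m$ we have $m-1\ge 2^j$, and since $A$ is increasing, $A_{m-1}\ge A_{2^j}$. The range contains at most $2^j$ integers, each with $m^{\alpha+1}\le 2^{(j+1)(\alpha+1)}$. Hence the part of the right-hand side coming from this range is at most
\[
2^j\cdot 2^{(j+1)(\alpha+1)}\,\frac{1}{A_{2^j}}
\le C_\alpha\,\frac{2^{j(\alpha+2)}}{A_{2^j}} .
\]
A range $(2^j,2^{j+1}]$ contributes only if $2^j<N$. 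In that case $I_j\subset\{1,\dots,N\}$, so every block used below lies inside the summation range of the left-hand side.

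\textbf{Lower bound on each block.} For each such $j$, we have $i^\alpha\ge 2^{(j-1)\alpha}$ on $I_j$ (for $j=0$ this reads $i^\alpha=1\ge 2^{-\alpha}$). By Cauchy--Schwarz,
\[
\sum_{i\in I_j}\frac{1}{a_i}\ \ge\ \frac{|I_j|^2}{\sum_{i\in I_j}a_i}\ \ge\ \frac{|I_j|^2}{A_{2^j}},
\]
where the last step uses $a_i>0$, so the block sum is at most $A_{2^j}$. Since $|I_j|\ge 2^{j-1}$, this gives
\[
\sum_{i\in I_j}\frac{i^\alpha}{a_i}\ \ge\ c_\alpha\,\frac{2^{j(\alpha+2)}}{A_{2^j}} .
\]

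\textbf{Conclusion and the delicate point.} The blocks $I_j$ are pairwise disjoint. Summing the block lower bound over all $j$ with $2^j<N$ and comparing termwise with the range upper bound proves the lemma, with constant $c_\alpha/C_\alpha$. The only step needing care is the index bookkeeping: the range $(2^j,2^{j+1}]$ must be paired with $I_j$ and not with $I_{j+1}$. This pairing is exactly what makes $A_{m-1}\ge A_{2^j}$ available on the right while keeping the block $I_j$ inside $\{1,\dots,N\}$. The shift from $A_m$ to $A_{m-1}$ in the statement absorbs the one-index offset, and $N\ge2$ guarantees that the right-hand side is nonempty.
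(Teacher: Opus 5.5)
Your proof is correct. It uses the same core mechanism as the paper: Cauchy--Schwarz on a block of about $m/2$ indices in $[m/2,m-1]$, where $i^\alpha\gtrsim m^\alpha$ and the block sum of the $a_i$ is at most $A_{m-1}$. What differs is how the blocks are organized.

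The paper gives each $m\ge2$ its own overlapping window $I_m=\{\lceil m/2\rceil,\dots,m-1\}$. It proves the bound $m^{\alpha+1}/A_{m-1}\le C_\alpha\, m^{-1}\sum_{i\in I_m}i^\alpha/a_i$ for each $m$ separately. It then interchanges the order of summation, using that $i\in I_m$ forces $i<m\le 2i$, so the multiplicity weight $\sum_{m:\,i\in I_m}m^{-1}$ is uniformly bounded.

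You instead fix a disjoint dyadic partition of the left-hand side. You pair the range $(2^j,2^{j+1}]$ of $m$ with the block $(2^{j-1},2^j]$ of $i$ and bound each $m$-range by its worst term, using $A_{m-1}\ge A_{2^j}$. Your bookkeeping is right: the range is paired with $I_j$, not $I_{j+1}$, and only ranges with $2^j<N$ contribute, so each block stays inside $\{1,\dots,N\}$.

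Your route avoids the interchange-and-multiplicity step and gives the explicit constant $2^{-(2\alpha+3)}$. The paper's windowed version avoids the dyadic case bookkeeping, such as the special block $I_0=\{1\}$. Beyond that, the two arguments are equally elementary.
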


\begin{proof}
	For each $m\ge2$, let
	\[
	I_m=\{i:\lceil m/2\rceil\le i\le m-1\}.
	\]
	Then $|I_m|\ge c m$ and $i\ge m/2$ for $i\in I_m$.  By Cauchy's inequality,
	\[
	\sum_{i\in I_m}\frac{i^\alpha}{a_i}
	\ge
	c_\alpha m^\alpha\frac{|I_m|^2}{\sum_{i\in I_m}a_i}
	\ge
	c_\alpha\frac{m^{\alpha+2}}{A_{m-1}}.
	\]
	Hence
	\[
	\frac{m^{\alpha+1}}{A_{m-1}}
	\le
	C_\alpha\frac1m\sum_{i\in I_m}\frac{i^\alpha}{a_i}.
	\]
	Summing over $m$ and interchanging sums gives
	\[
	\sum_{m=2}^{N}\frac{m^{\alpha+1}}{A_{m-1}}
	\le
	C_\alpha\sum_{i=1}^{N}\frac{i^\alpha}{a_i}
	\sum_{m:\ i\in I_m}\frac1m.
	\]
	If $i\in I_m$, then $i<m\le2i+2$, so
	$\sum_{m:\ i\in I_m}m^{-1}\le C$.  This proves the lemma.
\end{proof}

\begin{lemma}\label{lem:tail-hardy}
	Let $s>1$, $\alpha\ge0$, and let $\delta_1,\dots,\delta_N>0$.  Define
	\[
	H_m=\sum_{j=m}^{N}\delta_j,
	\qquad 1\le m\le N.
	\]
	Then
	\begin{equation}\label{eq:tail-hardy}
		\sum_{m=1}^{N}m^\alpha\frac{H_m^s}{\delta_m}
		\ge
		c_{s,\alpha}\sum_{m=1}^{N}m^{\alpha+1}H_m^{s-1}.
	\end{equation}
\end{lemma}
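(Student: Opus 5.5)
We prove \eqref{eq:tail-hardy} by a dyadic blocking argument combined with a discrete mean-value estimate for the decreasing sequence $H_m$; this mirrors the proof of Lemma \ref{lem:weighted-hardy-copson}. Note first that $H$ is strictly decreasing, with $H_m-H_{m+1}=\delta_m$ (set $H_{N+1}=0$). The basic pointwise estimate is
\[
\frac{H_m^s}{\delta_m}\ge \frac{H_m^{s-1}}{\delta_m}\,H_m ,
\]
and the ratio $H_m/\delta_m$ can only be small when $H$ drops by a fixed fraction in one step. Such drops are controlled by geometric decay.

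The plan is to compare the right side term by term against sums over windows. Fix $m$ and consider the window $J_m=\{j: m\le j\le 2m\}\cap[1,N]$. For $j\in J_m$ we have $j^\alpha\ge m^\alpha$. We split into two cases.

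\textbf{Case 1: $H_{\min(2m,N)+1}\ge H_m/2$.} Here $H_j\ge H_m/2$ on the window. If the window is full, then $\sum_{j\in J_m}\delta_j\le H_m$. By Cauchy's inequality,
\[
\sum_{j\in J_m}\frac{H_j^s}{\delta_j}\ge 2^{-s}H_m^s\frac{|J_m|^2}{\sum_{j\in J_m}\delta_j}\ge c\,m^2H_m^{s-1}.
\]
Dividing by $m$ and summing over $m$, each $j$ lies in at most $j$ windows, each weighted by $1/m\le 2/j$. This gives the bound with the $m^{\alpha+1}$ weight, exactly as in Lemma \ref{lem:weighted-hardy-copson}.

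\textbf{Case 2: $H$ halves within the window.} Here the window may be truncated at $N$, and the tail contributes directly. If the window is truncated, then $H_m=\sum_{j=m}^N\delta_j$ involves at most $m+1$ terms. Cauchy's inequality again gives $\sum_{j\ge m}H_j^s/\delta_j$ bounded below by the last term alone, $H_N^{s}/\delta_N=H_N^{s-1}$, and by the halving structure. When $H$ halves within $[m,2m]$, I charge $m^{\alpha+1}H_m^{s-1}$ to an earlier or later index via the telescoping $\sum_j H_j^{s-1}(H_j-H_{j+1})\cdot$. More cleanly, $H_j^s/\delta_j\ge H_j^{s-1}$ always holds, since $\delta_j\le H_j$.

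The simplest route is therefore a two-regime argument organized by the level sets $\{m: 2^{-k-1}H_1<H_m\le 2^{-k}H_1\}$. Inside a level block of length $\ell$ starting at $a$, Cauchy's inequality yields $\sum H_j^s/\delta_j\gtrsim \ell^2 H^{s-1}$ times the local $j^\alpha$. Since $H_m\le 2H$ there, the target contribution from the block is $\lesssim\sum_{\text{block}} m^{\alpha+1}H^{s-1}$. This is at most $(a+\ell)^{\alpha+1}\ell H^{s-1}$ in size.

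\textbf{Main obstacle.} When $\ell\ll a$, the block estimate gives only $a^\alpha\ell^2H^{s-1}$, which is too small. I expect this step to be the hardest. To handle it, I will use the geometric decay across blocks: the blocks to the left have $H$ at least $2^{i}H$. Short blocks are then dominated by the largest preceding block of comparable or longer length, because
\[
\sum_{k'<k}\ell_{k'}\,2^{-k'(s-1)}
\]
dominates by geometric summation. Concretely, I would bound $m^{\alpha+1}H_m^{s-1}$ by the contribution of the block containing $\lceil m/2\rceil$, for which Case 1 applies. If that block differs, $H_{m/2}\ge 2H_m$, and I iterate. Since $s>1$, the weights $2^{-(s-1)i}$ sum, which closes the estimate with $c_{s,\alpha}$.
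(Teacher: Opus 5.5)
\textbf{There is a genuine gap: the proposal does not prove the inequality in the case you call Case~2.}

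Your Case~1 estimate is correct. It only handles indices $m$ for which $H$ stays above $H_m/2$ on a full window $[m,2m]$, and the remaining indices are where the whole difficulty lies.

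Case~1 is impossible whenever $2m\ge N$, since $H_{N+1}=0$. It can also fail at every scale. Take $s=2$, $\alpha=0$, $\delta_m=\frac{1}{m(m+1)}$ for $m<N$ and $\delta_N=\frac1N$. Then $H_m=\frac1m$, so $H_{2m+1}<H_m/2$ for every $m$ and no index is in Case~1, yet both sides of \eqref{eq:tail-hardy} are of order $N$.

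None of your Case~2 tools closes this.
\begin{itemize}
\item The pointwise bound $H_j^s/\delta_j\ge H_j^{s-1}$ is one power of $m$ short.
\item The level-block bound fails for short blocks, as you note.
\item The iteration $m\mapsto\lceil m/2\rceil$ is miscounted. Replacing $m$ by $m_i\approx 2^{-i}m$ costs a factor $2^{i(\alpha+1)}$ in the weight $m^{\alpha+1}$, and about $2^i$ indices are charged to the same $m_i$. Meanwhile $H_{m_i}\ge 2^iH_m$ gains only $2^{-i(s-1)}$. So the series you must control is $\sum_i 2^{i(\alpha+3-s)}$, not $\sum_i 2^{-i(s-1)}$, and it is unbounded for $s\le\alpha+3$.
\end{itemize}
In the example above, $mH_m=\tfrac m2 H_{m/2}$ exactly. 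Each halving step therefore gains nothing, and the chain never reaches a Case~1 index.

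The missing idea is to look to the \emph{left} of $m$, where monotonicity gives $H_j\ge H_m$ for free, and to put the factor $H_j^{-s}$ into the Cauchy--Schwarz denominator. For $m\ge2$ and $I_m=\{\lceil m/2\rceil,\dots,m-1\}$,
\[
\sum_{j\in I_m}j^\alpha\frac{H_j^s}{\delta_j}
\ge 2^{-\alpha} m^\alpha\frac{|I_m|^2}{\sum_{j\in I_m}\delta_jH_j^{-s}},
\qquad
\sum_{j=1}^{m-1}\frac{\delta_j}{H_j^s}
\le\sum_{j=1}^{m-1}\int_{H_{j+1}}^{H_j}t^{-s}\,\dd t
\le\frac{H_m^{1-s}}{s-1}.
\]
The second bound holds no matter how much $H$ drops inside $I_m$. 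It is the correct home for your geometric summation over levels: the level where $H\approx 2^{-k}H_1$ contributes at most a constant times $2^{k(s-1)}H_1^{1-s}$ to the denominator, so the last level dominates.

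Together these give
\[
m^{\alpha+1}H_m^{s-1}\le C_{s,\alpha}\,m^{-1}\sum_{j\in I_m}j^\alpha \frac{H_j^s}{\delta_j}
\]
for every $m\ge2$, with no case distinction. The bounded overlap $\sum_{m:\,j\in I_m}m^{-1}\le C$, together with $\delta_1\le H_1$ for the term $m=1$, then finishes the proof.

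This is exactly the paper's argument. It sets $a_i=\delta_i/H_i^s$, notes $A_{m-1}\le H_m^{1-s}/(s-1)$, and applies Lemma~\ref{lem:weighted-hardy-copson}.
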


\begin{proof}
	The $m=1$ term on the right is controlled by the $m=1$ term on the left because
	$\delta_1\le H_1$.  For $m\ge2$, set
	\[
	a_i=\frac{\delta_i}{H_i^s},
	\qquad
	A_{m-1}=\sum_{i=1}^{m-1}a_i.
	\]
	Since $H_{i+1}=H_i-\delta_i$ and $t^{-s}\ge H_i^{-s}$ for
	$t\in[H_{i+1},H_i]$, we have
	\[
	\frac{\delta_i}{H_i^s}
	\le
	\int_{H_{i+1}}^{H_i}t^{-s}\,dt.
	\]
	Therefore
	\[
	A_{m-1}
	\le
	\int_{H_m}^{H_1}t^{-s}\,dt
	\le
	\frac{H_m^{1-s}}{s-1}.
	\]
	Hence
	\[
	H_m^{s-1}\le \frac{1}{s-1}\frac1{A_{m-1}}.
	\]
	Applying Lemma \ref{lem:weighted-hardy-copson} with
	$\alpha$ and $a_i=\delta_i/H_i^s$ gives
	\[
	\begin{aligned}
		\sum_{i=1}^{N}i^\alpha\frac{H_i^s}{\delta_i}
		=\sum_{i=1}^{N}\frac{i^\alpha}{a_i}
		&\ge
		c_\alpha\sum_{m=2}^{N}\frac{m^{\alpha+1}}{A_{m-1}}  \\
		&\ge
		c_{s,\alpha}\sum_{m=2}^{N}m^{\alpha+1}H_m^{s-1}.
	\end{aligned}
	\]
	Together with the $m=1$ observation, this proves \eqref{eq:tail-hardy}.
\end{proof}

\subsection{First exits and parallel sums}

For a sampled path $\gamma$, let $\alpha_k(\gamma)$ be the first edge of
$\gamma$ crossing from $B_k$ to $V\setminus B_k$, $1\le k\le R$.  Such an
edge exists because the path begins at $o\in B_k$ and terminates at the collapsed
boundary.  Write
\[
\delta_k^*(\gamma)=\delta_{\alpha_k(\gamma)},
\qquad
H_n^*(\gamma)=\sum_{k=n}^{R}\delta_k^*(\gamma).
\]
Thus $\delta_k^*$ is the selected first-exit voltage drop at scale $k$, and
$H_n^*$ is the corresponding tail of selected drops from scale $n$ onward.
Let $z_k$ be the endpoint of $\alpha_k(\gamma)$ lying in $B_k$.
Graph distance changes by at most one across an edge, so
$d(o,z_k)=k$ and $\rho(z_k)=k+1$.

The selected edges are distinct and occur in strict chronological order.  Indeed,
to leave $B_\ell$ with $\ell>k$, the path must first leave $B_k$; moreover, an edge
between adjacent vertices can cross at most one of the metric cuts
$B_j\,|\,V\setminus B_j$.  Since all terms in Lemma
\ref{lem:weighted-mass-domination} are nonnegative, keeping only these selected
edges gives
\[
M_{s,P}(R)\ge \mathbb E_\gamma\sum_{k=1}^R
\rho(z_k)^{2P}\frac{g_R(z_k)^s}{\delta_k^*}.
\]
The part of $\gamma$ from $z_k$ to the boundary contains
$\alpha_k,\ldots,\alpha_R$.  Since $g_R$ decreases along the path and vanishes
at the boundary, $g_R(z_k)$ is the sum of all voltage drops along this terminal
segment.  Hence
\[
g_R(z_k)\ge \sum_{j=k}^R\delta_j^*=H_k^*.
\]
Using $(k+1)^{2P}\ge k^{2P}$, we obtain
\[
M_{s,P}(R)\ge \mathbb E_\gamma\sum_{k=1}^R
k^{2P}\frac{(H_k^*)^s}{\delta_k^*}.
\]
For each fixed sampled path, Lemma \ref{lem:tail-hardy} with $\alpha=2P$
yields
\[
\sum_{k=1}^R k^{2P}\frac{(H_k^*)^s}{\delta_k^*}
\ge c_{s,P}\sum_{n=1}^R n^{2P+1}(H_n^*)^{s-1}.
\]
Taking expectation gives
\begin{equation}\label{eq:selected-edge-lower}
	M_{s,P}(R)
	\ge
	c_{s,P}\sum_{n=1}^{R}
	n^{2P+1}\mathbb E_\gamma\bigl(H_n^*\bigr)^{s-1}.
\end{equation}

We now estimate $\mathbb E_\gamma(H_n^*)^{s-1}$ from below.  Fix $n$.
If a directed edge $e$ is selected as the first exit edge from $B_k$, then the sampled path
uses $e$.  Hence, using \eqref{eq:path-probability},
\[
\begin{aligned}
	\mathbb E_\gamma\frac1{\delta_k^*}
	&=\sum_e\mathbb P(\alpha_k=e)\frac1{\delta_e}\\
	&\le
	\sum_{\substack{e\text{ retained and oriented}\\
			\text{outward across }B_k}}
	\theta_e\frac1{\delta_e}\\
	&=
	\sum_{\substack{e\text{ retained and oriented}\\
			\text{outward across }B_k}}
	\mu_e^{(R)}
	\le b_k.
\end{aligned}
\]
The parallel-sum map
\[
(y_n,\dots,y_R)\mapsto
\left(\sum_{k=n}^{R}\frac1{y_k}\right)^{-1}
\]
is increasing and concave on $(0,\infty)^{R-n+1}$.
Indeed, if $F(y)=(\sum_{k=n}^R y_k^{-1})^{-1}$, then
$\partial_jF=F^2/y_j^2>0$, and Cauchy's inequality gives
\[
\xi^{\mathsf T}D^2F(y)\xi
=2F^3\left(\sum_{k=n}^R\frac{\xi_k}{y_k^2}\right)^2
-2F^2\sum_{k=n}^R\frac{\xi_k^2}{y_k^3}\le0.
\]
Since
\[
(H_n^*)^{-1}
=F\left(\frac1{\delta_n^*},\ldots,\frac1{\delta_R^*}\right),
\]
Jensen's inequality with respect to the path probability measure gives
\[
\mathbb E_\gamma (H_n^*)^{-1}
\le
F\left(\mathbb E_\gamma\frac1{\delta_n^*},\ldots,
\mathbb E_\gamma\frac1{\delta_R^*}\right)
\le
\left(\sum_{k=n}^{R}\frac1{b_k}\right)^{-1}.
\]
The first inequality uses the concavity of the parallel-sum map, and the
second uses its coordinatewise monotonicity together with
$\mathbb E_\gamma(1/\delta_k^*)\le b_k$.

Finally, for every positive random variable $X$ and every $r>0$, H\"older's
inequality gives
\[
1=\mathbb E\!\left[X^{r/(r+1)}X^{-r/(r+1)}\right]
\le (\mathbb E X^r)^{1/(r+1)}
(\mathbb E X^{-1})^{r/(r+1)},
\]
and therefore
\[
\mathbb E X^r\ge (\mathbb E X^{-1})^{-r}.
\]
Applying this to $X=H_n^*$ and $r=s-1$ yields
\begin{equation}\label{eq:first-exit-lower}
	\mathbb E_\gamma (H_n^*)^{s-1}
	\ge
	\left(\sum_{k=n}^{R}\frac1{b_k}\right)^{s-1}.
\end{equation}
Combining \eqref{eq:selected-edge-lower} and \eqref{eq:first-exit-lower}
proves Proposition \ref{prop:weighted-flow-lower}.
\par\hfill\qedsymbol



We need one deterministic comparison between ball volumes and the boundary
weights $b_k$.

\begin{lemma}\label{lem:volume-tail-comparison}
	Let $a\ge0$ and $r>0$.  If
	\begin{equation}\label{eq:volume-series-general}
		\sum_{n=2}^{\infty}\frac{n^{a+2r}}{\mu(B_n)^r}=\infty,
	\end{equation}
	then
	\begin{equation}\label{eq:volume-tail}
		\sum_{n=1}^{\infty}
		n^a\left(\sum_{k=n}^{\infty}\frac{k}{\mu(B_k)}\right)^r
		=\infty.
	\end{equation}
\end{lemma}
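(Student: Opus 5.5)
The plan is a dyadic block comparison that uses only the monotonicity of $n\mapsto\mu(B_n)$. Write $V_n=\mu(B_n)$, so that $V_n\le V_{n+1}$ and $V_n>0$, and set $f(m)=m^{a+2r}/V_m^r$. If $\sum_{k\ge1}k/V_k=\infty$, then every inner tail in \eqref{eq:volume-tail} is infinite and there is nothing to prove. So we may assume these tails are finite.

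\emph{Lower bound for one term.} Fix $n\ge1$ and keep only the indices $k=n,\dots,2n$ in the inner tail. Each such term satisfies $k/V_k\ge n/V_{2n}$, because $V_k\le V_{2n}$ for $k\le2n$, and there are $n+1\ge n$ of them. Hence
\[
\sum_{k=n}^{\infty}\frac{k}{\mu(B_k)}\ge\frac{n^2}{V_{2n}} .
\]
Raising to the power $r$ and multiplying by $n^a$ gives
\[
n^a\Bigl(\sum_{k=n}^{\infty}\frac{k}{\mu(B_k)}\Bigr)^r\ge\frac{n^{a+2r}}{V_{2n}^r}=2^{-(a+2r)}f(2n).
\]
Consequently the series in \eqref{eq:volume-tail} dominates $2^{-(a+2r)}\sum_{n\ge1}f(2n)$, which is a sum over even indices only.

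\emph{Recovering the odd indices.} It remains to show that $\sum_{m\ge2}f(m)=\infty$ forces $\sum_{n\ge1}f(2n)=\infty$. Take an odd index $m=2j+1$ with $j\ge1$. Monotonicity gives $V_{2j+1}\ge V_{2j}$, and since $2j+1\le\tfrac32(2j)$,
\[
f(2j+1)\le\frac{(2j+1)^{a+2r}}{V_{2j}^r}\le\Bigl(\tfrac32\Bigr)^{a+2r}f(2j).
\]
Therefore
\[
\sum_{m\ge2}f(m)\le\Bigl(1+\bigl(\tfrac32\bigr)^{a+2r}\Bigr)\sum_{j\ge1}f(2j).
\]
The divergence in \eqref{eq:volume-series-general} thus transfers to the even subseries, and hence to \eqref{eq:volume-tail}.

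No step here is a genuine obstacle. The only point needing care is the direction of monotonicity in each comparison: in the block estimate the smallest term in the window is controlled by $V_{2n}$, the largest volume in the window, and in the parity step the odd term is controlled by the preceding even one. Both uses rely solely on $V_n$ being nondecreasing, and no growth or doubling assumption on the volumes is needed.
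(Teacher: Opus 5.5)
Your proposal is correct and follows the paper's proof essentially step for step. Like the paper, you pass to the even subseries using monotonicity of $\mu(B_n)$, and you bound the inner tail from below by the window $k=n,\dots,2n$, which gives $n^2/\mu(B_{2n})$. Your explicit constant $(3/2)^{a+2r}$ and the remark about infinite tails only make the paper's brief steps more precise.
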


\begin{proof}
	It suffices to use even indices.  Since $\mu(B_n)$ is nondecreasing, the odd term
	$(2m+1)^{a+2r}/\mu(B_{2m+1})^r$ is bounded by a constant multiple of
	$(2m)^{a+2r}/\mu(B_{2m})^r$.  Hence \eqref{eq:volume-series-general} implies
	\[
	\sum_{m=1}^{\infty}\frac{(2m)^{a+2r}}{\mu(B_{2m})^r}=\infty.
	\]
	For every $m$,
	\[
	\sum_{k=m}^{2m}\frac{k}{\mu(B_k)}
	\ge
	\frac1{\mu(B_{2m})}\sum_{k=m}^{2m}k
	\ge c\frac{m^2}{\mu(B_{2m})}.
	\]
	Therefore
	\[
	m^a\left(\sum_{k=m}^{\infty}\frac{k}{\mu(B_k)}\right)^r
	\ge
	c\frac{m^{a+2r}}{\mu(B_{2m})^r}.
	\]
	Summing over $m$ proves \eqref{eq:volume-tail}.
\end{proof}

\begin{lemma}\label{lem:cut-resistance-comparison}
	Let $a\ge0$ and $r>0$.  If
	\[
	\sum_{n=1}^{\infty}
	n^a\left(\sum_{k=n}^{\infty}\frac{k}{\mu(B_k)}\right)^r
	=\infty,
	\]
	then
	\begin{equation}\label{eq:cut-series-divergence}
		\sum_{n=1}^{\infty}
		n^a\left(\sum_{k=n}^{\infty}\frac1{b_k}\right)^r
		=\infty.
	\end{equation}
\end{lemma}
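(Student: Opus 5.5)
The idea is to compare the two tails through a Nash--Williams/Cauchy--Schwarz inequality on the annuli. Write $S_k=B_k\setminus B_{k-1}$ for $k\ge1$ and $S_0=\{o\}$. Every edge crossing the boundary of $B_k$ joins a vertex of $S_k$ to a vertex of $S_{k+1}$. Hence
\[
b_k\le \sum_{x\in S_k}\mu(x)=\mu(S_k),
\qquad
\sum_{j=0}^{k} b_j\le \mu(B_k)+\mu(B_{k})\le 2\mu(B_k),
\]
since each edge crossing some cut is counted at most once per cut and each $b_j$ is bounded by $\mu(S_j)$. So the partial sums satisfy $\sum_{j\le k} b_j\le \mu(B_k)$.

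For the tail starting at $n$, I would apply Cauchy--Schwarz to the block $n\le k\le 2N$ with $N\ge n$:
\[
\Bigl(\sum_{k=n}^{2N}1\Bigr)^2\le \Bigl(\sum_{k=n}^{2N}\frac1{b_k}\Bigr)\Bigl(\sum_{k=n}^{2N}b_k\Bigr)\le \Bigl(\sum_{k=n}^{2N}\frac1{b_k}\Bigr)\mu(B_{2N}).
\]
This gives the dyadic-type bound $\sum_{k=n}^{2N}1/b_k\ge c\,N^2/\mu(B_{2N})$ when $N\ge n$, because the number of terms is at least $N$.

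The goal is then to dominate $\sum_{k\ge n}k/\mu(B_k)$ by a constant times $\sum_{k\ge n}1/b_k$, possibly after passing to a comparable index. The direct route is Abel summation. Write $\beta_k=\sum_{j\le k}b_j\le\mu(B_k)$. Then $k/\mu(B_k)\le k/\beta_k$, and it suffices to show
\[
\sum_{k\ge n}\frac{k}{\beta_k}\le C\sum_{k\ge n/2}\frac1{b_k}+(\text{harmless}).
\]
This is a Hardy-type inequality of the same flavour as Lemma~\ref{lem:weighted-hardy-copson}, with $i^\alpha=1$. Fix $k$ and take $I_k=\{\lceil k/2\rceil,\dots,k\}$. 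By Cauchy, $\sum_{i\in I_k}1/b_i\ge |I_k|^2/\sum_{i\in I_k}b_i\ge c k^2/\beta_k$. Therefore
\[
\frac{k}{\beta_k}\le \frac{C}{k}\sum_{i\in I_k}\frac1{b_i}.
\]
Summing over $k\ge n$ and interchanging the sums, each $i\ge n/2$ appears only for $i\le k\le 2i$, with total weight $\sum_k 1/k\le C$. This yields
\[
\sum_{k\ge n}\frac{k}{\mu(B_k)}\le C\sum_{k\ge \lceil n/2\rceil}\frac1{b_k}.
\]

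It remains to absorb the shift $n\mapsto\lceil n/2\rceil$. Setting $T(m)=\sum_{k\ge m}1/b_k$, the divergence hypothesis gives $\sum_n n^a T(\lceil n/2\rceil)^r=\infty$. Each $m$ equals $\lceil n/2\rceil$ for at most two values of $n$, each $\le 2m$. Hence $\sum_m (2m)^a T(m)^r\cdot 2=\infty$, which is \eqref{eq:cut-series-divergence} up to the constant $2^{a+1}$.

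If some tail $T(m)$ is infinite, the conclusion is trivial, so no finiteness issues arise. The step requiring most care is the correct edge counting, namely $\beta_k\le\mu(B_k)$. This needs each edge to cross at most one cut, with its lower endpoint lying in $B_k$. The Hardy interchange is routine given Lemma~\ref{lem:weighted-hardy-copson}.
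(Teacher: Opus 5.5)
Your proof is correct and is essentially the paper's argument: Cauchy--Schwarz on half-length blocks (the mechanism of Lemma~\ref{lem:weighted-hardy-copson} with $\alpha=0$), the cut-counting bound $\sum_{j\le k}b_j\le\mu(B_k)$, and a factor-two index shift. The differences are cosmetic. You prove $\sum_{k\ge n}k/\mu(B_k)\le C\sum_{k\ge\lceil n/2\rceil}1/b_k$ and absorb the shift by reindexing on the $b_k$-side, whereas the paper proves $\sum_{k\ge n}1/b_k\ge c\sum_{k\ge 2n}k/\mu(B_k)$ and passes to an even subseries using monotonicity of the tails. Your displayed ``$\le 2\mu(B_k)$'' line is garbled, but the bound $b_j\le\mu(S_j)$ you state right after it correctly justifies the inequality you use.
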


\begin{proof}
	For this proof only, write
	\[
	K_n=\sum_{k=n}^{\infty}\frac1{b_k},
	\qquad
	H_n=\sum_{k=n}^{\infty}\frac{k}{\mu(B_k)}.
	\]
	We first prove
	\begin{equation}\label{eq:K-H-shift}
		K_n\ge cH_{2n}
	\end{equation}
	for all $n\ge1$.  For finite $N>2n$, apply Lemma
	\ref{lem:weighted-hardy-copson} with $\alpha=0$ to the sequence
	$b_n,b_{n+1},\dots,b_N$.  After shifting the indices, we obtain
	\[
	\sum_{j=n}^{N}\frac1{b_j}
	\ge c\sum_{m=n}^{N-1}
	\frac{m-n+2}{\sum_{j=n}^{m}b_j}.
	\]
	For $m\ge2n$, $m-n+2\ge m/2$.  To estimate the denominator, let
	\[
	\Pi_j=\bigl\{\{x,y\}\in E:x\in B_j,\ y\notin B_j\bigr\},
	\qquad 0\le j\le m.
	\]
	Because graph distance changes by at most one across an edge, the cutsets
	$\Pi_0,\ldots,\Pi_m$ are pairwise edge-disjoint.  Every edge in their union
	has an endpoint in $B_m$.  Consequently,
	\[
	\sum_{j=n}^{m}b_j
	\le \sum_{j=0}^{m}b_j
	=\sum_{e\in\bigcup_{j=0}^{m}\Pi_j}\mu_e
	\le \sum_{x\in B_m}\sum_{y\sim x}\mu_{xy}
	=\mu(B_m).
	\]
	Hence
	\[
	\sum_{j=n}^{N}\frac1{b_j}
	\ge c\sum_{m=2n}^{N-1}\frac{m}{\mu(B_m)}.
	\]
	Letting $N\to\infty$ proves \eqref{eq:K-H-shift}.
	
	Since $H_n$ is decreasing, the divergence of
	$\sum n^aH_n^r$ implies the divergence of its even subseries.  Indeed,
	\[
	(2m-1)^aH_{2m-1}^r\le C_a(2m-2)^aH_{2m-2}^r
	\]
	for $m\ge2$.  Therefore
	\[
	\sum_{n=1}^{\infty}n^aH_{2n}^r=\infty.
	\]
	Using \eqref{eq:K-H-shift},
	\[
	\sum_{n=1}^{\infty}n^aK_n^r
	\ge c\sum_{n=1}^{\infty}n^aH_{2n}^r
	=\infty.
	\]
	This proves \eqref{eq:cut-series-divergence}.
\end{proof}

\section{Proofs of the main results}\label{prf}

\begin{proof}[Proof of Theorem \ref{thm:sharp-asymmetric}]
	Suppose, to the contrary, that there is a nontrivial nonnegative solution $(u,v)$.  By
	Proposition \ref{prop:system-upper-endpoint}, with
	\[
	s=pq,
	\qquad
	P=\max\{p,q\},
	\]
	we have
	\begin{equation}\label{eq:bounded-M}
		\sup_{R\ge1}M_{pq,P}(R)<\infty.
	\end{equation}
	
	Because $pq-1>0$, the finite sums satisfy
	\[
	\sum_{n=1}^{R}n^{2P+1}
	\left(\sum_{k=n}^{R}\frac1{b_k}\right)^{pq-1}
	\uparrow
	\sum_{n=1}^{\infty}n^{2P+1}
	\left(\sum_{k=n}^{\infty}\frac1{b_k}\right)^{pq-1}.
	\]
	Proposition \ref{prop:weighted-flow-lower} bounds $M_{pq,P}(R)$ from
	below by a fixed positive multiple of the finite sum on the left.  Hence
	divergence of the limiting series contradicts \eqref{eq:bounded-M}.
	
	It remains to derive this divergence from the volume condition
	\eqref{eq:sharp-series}.  Apply Lemmas \ref{lem:volume-tail-comparison} and
	\ref{lem:cut-resistance-comparison} with $a=2P+1$ and $r=pq-1$.
	The resulting volume exponent is
	\[
	(2P+1)+2(pq-1)=2pq+2P-1,
	\]
	exactly the exponent in \eqref{eq:sharp-series}.  Therefore
	\[
	\sum_{n=1}^{\infty}n^{2P+1}
	\left(\sum_{k=n}^{\infty}\frac1{b_k}\right)^{pq-1}
	=\infty,
	\]
	which is the required contradiction.  Thus no nontrivial nonnegative
	solution exists, and the only nonnegative solution is
	$u\equiv v\equiv0$.
\end{proof}

\begin{proof}[Proof of Corollary \ref{cor:critical-log}]
	Choose an integer $R_0\ge3$ such that the assumed volume bound holds for
	all integers $R\ge R_0$.  Since
	\[
	(pq-1)D_{\rm sys}
	=2(pq-1)+2(P+1)=2pq+2P,
	\]
	we have, for $R\ge R_0$,
	\[
	\begin{aligned}
		\frac{R^{2pq+2P-1}}{\mu(B_R)^{pq-1}}
		&\ge
		c\frac{R^{2pq+2P-1}}{
			R^{(pq-1)D_{\rm sys}}(\log R)^{\theta(pq-1)}}  \\
		&=c\frac{1}{R(\log R)^{\theta(pq-1)}}.
	\end{aligned}
	\]
	The finite initial segment is irrelevant, and the logarithmic harmonic
	series on the right diverges exactly when
	$\theta(pq-1)\le1$.  Hence \eqref{eq:sharp-series} holds, and Theorem
	\ref{thm:sharp-asymmetric} gives $u\equiv v\equiv0$.
\end{proof}

\begin{proof}[Proof of Theorem \ref{thm:symmetric}]
	Let $(u,v)$ be a nonnegative solution with $p=q=\sigma>1$, and put
	$w=u+v$.  By convexity,
	\[
	-\Delta w\ge u^\sigma+v^\sigma
	\ge 2^{1-\sigma}w^\sigma.
	\]
	Consequently,
	\[
	-\Delta(w/2)\ge (w/2)^\sigma.
	\]
	By the sharp scalar volume criterion proved in \cite{GHHS26}, condition
	\eqref{eq:scalar-series} forces $w/2\equiv0$.  Hence $w\equiv0$, and
	the nonnegativity of $u$ and $v$ gives
	$u\equiv v\equiv0$.
\end{proof}

\section{Logarithmic sharpness on weighted half-lines}\label{counter}

We finish with an explicit model showing that the logarithmic endpoint in
Corollary \ref{cor:critical-log} cannot be improved within the critical
polynomial--logarithmic volume scale.

\begin{theorem}\label{thm:weighted-line-sharpness}
	Assume $p,q>0$, $pq>1$, and $p\ne q$.  Let
	\[
	P=\max\{p,q\},
	\qquad
	D_{\rm sys}=2+\frac{2(P+1)}{pq-1}.
	\]
	For every $\eps>0$ there exists an infinite, connected, locally finite
	weighted graph with root $o$ such that
	\[
	\mu(B(o,R))\asymp
	R^{D_{\rm sys}}(\log R)^{1/(pq-1)+\eps}
	\qquad (R\to\infty),
	\]
	and the Lane--Emden system \eqref{eq:system-normalized} has a positive
	solution.
\end{theorem}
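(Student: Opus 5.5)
We build a weighted half-line $V=\{0,1,2,\dots\}$, $o=0$, with edge weights $\mu_{k,k+1}=w_k$ chosen so that $\mu(B_R)\asymp\sum_{k\le R}w_k\asymp R^{D_{\rm sys}}(\log R)^{\beta}$, where $\beta=1/(pq-1)+\eps$. Concretely we take $w_k\asymp k^{D_{\rm sys}-1}(\log(k+2))^{\beta}$ for large $k$; the volume asymptotics are then immediate. We look for explicit positive radial solutions of power--logarithmic form
\[
u(k)=\frac{A}{k^{a}(\log k)^{\kappa}},\qquad v(k)=\frac{B}{k^{b}(\log k)^{\lambda}}
\]
for large $k$, modified near the root. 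On the line, $Lf(k)=w_{k-1}(f(k)-f(k-1))+w_k(f(k)-f(k+1))$, and $-\Delta f(k)=Lf(k)/\mu(k)$ with $\mu(k)=w_{k-1}+w_k\asymp w_k$. For a smooth profile this is, to leading order, $-\mu(k)^{-1}(w f')'$. The point is that $f(k)=\sum_{j\ge k}1/w_j$ is exactly harmonic for $k\ge1$ (it is a potential), and modifications of it give strict superharmonicity.

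\textbf{Exponent matching.} Assume without loss of generality $q>p$, so $P=q$ (the case $p>q$ is symmetric). Write $D=D_{\rm sys}$; the harmonic decay rate is then $k^{2-D}(\log k)^{-\beta}$. We take $v$ to be a ``fast'' component that is essentially a multiple of the Green function, $v\asymp k^{2-D}(\log k)^{-\beta}$ up to a slowly varying correction, and $u$ the ``slow'' one. Scaling requires $a+2=bp$ and $b+2=aq$, which gives $a=2(p+1)/(pq-1)$ and $b=2(q+1)/(pq-1)$. Since $D-2=2(q+1)/(pq-1)=b$, indeed $v$ decays like the Green function, and this is exactly why the critical $D$ involves $P$. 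Then:
\begin{enumerate}[label=(\roman*)]
\item The $v$-inequality $-\Delta v\ge u^q$ requires $v$ to be strictly superharmonic with $-\Delta v$ of order $k^{-aq}(\log k)^{-\kappa q}=k^{-b-2}(\log k)^{-\kappa q}$. For $v=G\cdot\ell$ with $G(k)=\sum_{j\ge k}1/w_j$ and $\ell$ a slowly increasing logarithmic factor $(\log k)^{\tau}$ (or rather $v=\sum_{j\ge k}\phi(j)/w_j$ with $\phi$ decreasing), we get $Lv(k)=w_{k}(\ldots)$, i.e. $-\Delta v\asymp -\phi'(k)/\mu(k)$.
\item The $u$-inequality $-\Delta u\ge v^p$ has $u$ decaying slower than $G$ (since $a<b$), so $-\Delta u\asymp a(D-2-a)k^{-a-2}(\log k)^{-\kappa}$, which is positive, and we need $\kappa\le\lambda p$ up to constants.
\end{enumerate}
The logarithms must be balanced. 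We choose $v(k)=\sum_{j\ge k}\phi(j)/w_j$ with $\phi(j)=(\log j)^{-\tau}$, $\tau>0$, so that $v\asymp k^{-b}(\log k)^{-\beta-\tau}$ and $-\Delta v\asymp \tau k^{-D}(\log k)^{-\beta-\tau-1}$. Thus $\lambda=\beta+\tau$, and we need $\kappa q\ge \beta+\tau+1$ and $\kappa\le p(\beta+\tau)$. These are compatible iff $pq(\beta+\tau)\ge\beta+\tau+1$, i.e. $(\beta+\tau)(pq-1)\ge1$. This holds since $\beta>1/(pq-1)$, even for small $\tau$, and strict inequality allows $\kappa$ in an open interval, so the constants $A,B$ can then be tuned. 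This is precisely where $\eps>0$ is used, and it matches the sharpness of Corollary~\ref{cor:critical-log}.

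\textbf{Main obstacle.} The hardest step is rigor at the discrete level and near the root. We need the discrete second differences to agree with the continuous asymptotics with controlled errors (Taylor expansion with explicit $O(k^{-1})$ relative errors, which is fine since profiles are regularly varying), and we need positivity of the solution on all of $V$, including small $k$ where $\log k$ is degenerate. We handle the latter by replacing $\log k$ with $\log(k+k_0)$ for large $k_0$. We then define $u$ on an initial segment by prescribing its values and choosing the root weights $w_0,\dots,w_{k_0}$ so that the finitely many inequalities hold; at $k=0$, $-\Delta u(0)=u(0)-u(1)>0$ just needs $u$ strictly decreasing. Finally we use scaling: replacing $(u,v)$ by $(tu,t^{\gamma}v)$ with $\gamma=(q+1)/(p+1)$ turns the two inequality constants into $t^{1-p\gamma}$ and $t^{\gamma-q}$, i.e. powers $t^{-(pq-1)/(p+1)}$ and $t^{-(pq-1)/(p+1)}$. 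Both therefore improve as $t\to0$, so small $t$ turns the ``up to constants'' inequalities into genuine ones for all $k$ simultaneously.
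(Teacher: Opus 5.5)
\textbf{There is a genuine gap: your construction of the fast component $v$ has a sign error, and it breaks the $v$-inequality.} This is not a matter of constants. For $v(k)=\sum_{j\ge k}\phi(j)/w_j$ the flux is exactly $w_k\bigl(v(k)-v(k+1)\bigr)=\phi(k)$, so
\[
Lv(0)=\phi(0),\qquad Lv(k)=\phi(k)-\phi(k-1)\quad(k\ge1).
\]
The continuous analogue is $-(wv')'=\phi'$, not $-\phi'$. With $\phi(j)=(\log j)^{-\tau}$ decreasing, $v$ is strictly subharmonic at every $k\ge1$, so $-\Delta v\ge u^q$ fails there.

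More structurally, for any positive superharmonic $v$ on the half-line, the flux $F_k=w_k(v(k)-v(k+1))$ is nonnegative and nondecreasing. Hence if $F_{k_1}>0$, then $v(k)\ge F_{k_1}\sum_{j\ge k}w_j^{-1}=F_{k_1}G(k)$ for $k\ge k_1$. Here $G(k)\asymp k^{2-D}(\log k)^{-\beta}$, with $\beta$ your logarithmic weight exponent. Your $v\asymp k^{2-D}(\log k)^{-\beta-\tau}=o(G)$ therefore cannot be superharmonic, whatever the slowly varying correction.

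The symptom is visible in your bookkeeping. The condition $(\beta+\tau)(pq-1)\ge1$ can be met for any $\beta$ by enlarging $\tau$. So your argument would produce positive solutions for $\beta\le 1/(pq-1)$, contradicting Corollary~\ref{cor:critical-log}. In particular, $\eps$ is never actually used in your computation.

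\textbf{The fix.} Go in the direction your first heuristic $v=G\cdot(\log k)^{\tau}$ already indicated: take $\phi$ increasing, $\phi(j)=(\log(j+k_0))^{\tau}$ with $0<\tau<\eps$. Then
\[
Lv(k)\asymp \tau\,\mu(k)\,k^{-D}(\log k)^{-\lambda-1},\qquad \lambda=\beta-\tau<\beta .
\]
So the fast component must decay slightly \emph{slower} than the Green function. The constraints $\kappa q\ge\lambda+1$ and $\kappa\le p\lambda$ are compatible precisely when $\lambda(pq-1)\ge1$, i.e. $\tau\le\eps$. This is where $\eps>0$ genuinely enters: $\tau>0$ is needed for strict superharmonicity, and $\tau\le\eps$ for compatibility.

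Once corrected, your scheme is the paper's. There the fast profile has logarithmic exponent $1/(pq-1)+\eps/2$, strictly below the weight exponent $\theta=1/(pq-1)+\eps$; this is exactly the hypothesis $\delta<\theta$ in Lemma~\ref{lem:line-radial}(ii). The slow profile's logarithmic exponent is chosen strictly between the two constraints.

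Your flux definition of $v$ is a mild improvement. It makes $Lv$ exact, with no Taylor remainder and positivity at the root automatic, whereas the paper Taylor-expands both profiles. Your treatment of $u$ and the scaling $(tu,t^{\gamma}v)$ are correct. Instead of tuning root weights, it suffices to shift all weights and profiles by $k_0$ and check $k=0$ directly, as the paper does with $N$.
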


We use a weighted half-line.  Let
\[
V=\N_0=\{0,1,2,\dots\},
\qquad
E=\{\{n,n+1\}:n\ge0\},
\]
and assign conductance $\omega_n>0$ to the edge $\{n,n+1\}$.  The associated
vertex measure is
\[
\mu(0)=\omega_0,
\qquad
\mu(n)=\omega_{n-1}+\omega_n\quad(n\ge1).
\]
For a real-valued function $F$ on $\N_0$,
recall that 
\[
LF(0)=\omega_0\bigl(F(0)-F(1)\bigr)
\] 
and for $n\ge1$,
\[
LF(n)=\omega_n\bigl(F(n)-F(n+1)\bigr)+
\omega_{n-1}\bigl(F(n)-F(n-1)\bigr).
\]

The next lemma computes the ball volume for the conductances used below.

\begin{lemma}\label{lem:line-volume}
	Let $D>0$, $\theta\in\mathbb R$, and $N\ge3$, and set
	\[
	\omega_n=(n+N)^{D-1}\bigl(\log(n+N)\bigr)^\theta.
	\]
	Then, with root $o=0$,
	\[
	\mu(B(o,R))\asymp R^D(\log R)^\theta
	\qquad (R\to\infty).
	\]
	The comparison constants may depend on $D$, $\theta$, and $N$.
\end{lemma}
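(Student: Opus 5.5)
On the half-line, $B(o,R)=\{0,1,\dots,R\}$. Hence
\[
\mu(B(o,R))=\omega_0+\sum_{n=1}^{R}(\omega_{n-1}+\omega_n)=2\sum_{n=0}^{R-1}\omega_n+\omega_R .
\]
So the claim reduces to showing
\[
\sum_{n=0}^{R}\omega_n\asymp R^D(\log R)^\theta \qquad (R\to\infty).
\]
This is an upper and a lower bound for a sum of the regularly varying sequence $f(t)=t^{D-1}(\log t)^\theta$ evaluated at $t=n+N\ge 3$.

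For the upper bound, the plan is to split according to the sign of $D-1$ and $\theta$. Each $\omega_n$ with $n\le R$ satisfies $\omega_n\le C\,(R+N)^{D-1}(\log(R+N))^\theta$ when $D\ge1$ and $\theta\ge0$. Summing over $R+1$ terms then gives $CR^D(\log R)^\theta$, using $R+N\le 2R$ and $\log(R+N)\le 2\log R$ for large $R$.

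In the other sign cases the terms need not be monotone, so a dyadic decomposition is cleaner. On each block $n\in[2^{j},2^{j+1})$ we have
\[
(n+N)^{D-1}\asymp 2^{j(D-1)},\qquad \log(n+N)\asymp \log(2^{j}+N).
\]
Both comparisons hold with constants depending only on $D,\theta,N$; the second uses $N\ge3$, so that $\log(n+N)\ge\log 3>0$ stays bounded away from zero. Consequently the block sum is comparable to $2^{jD}\bigl(\log(2^j+N)\bigr)^\theta$.

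Summing these blocks over $j\le \log_2 R$ gives a geometric-type sum with ratio about $2^D>1$. It is dominated by its last term, because the logarithmic factor changes by at most a bounded factor between consecutive blocks while $2^D>1$ forces geometric growth. This yields the upper bound $CR^D(\log R)^\theta$; this is the step where $D>0$ is used.

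For the lower bound, I would simply keep the terms with $n\in[R/2,R]$. Each satisfies $\omega_n\ge cR^{D-1}(\log R)^\theta$, again because $n+N\asymp R$ and $\log(n+N)\asymp\log R$ for large $R$. There are at least $R/2$ such terms, giving $\mu(B(o,R))\ge cR^D(\log R)^\theta$. The initial finitely many $R$ are irrelevant, since the statement is asymptotic.

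The only mildly delicate point is the uniformity of the comparison $\log(n+N)\asymp\log(2^j+N)$ when $\theta<0$, that is, ensuring the logarithm never approaches zero. This is exactly what the normalization $N\ge3$ guarantees, and I expect no real obstacle beyond bookkeeping of constants depending on $D,\theta,N$.
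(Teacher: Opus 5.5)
Your proof is correct, but it takes a different route from the paper. Both arguments start from the same reduction $\sum_{n\le R}\omega_n\le\mu(B(o,R))\le 2\sum_{n\le R}\omega_n$. The paper then compares $\sum_{n=0}^{R}(n+N)^{D-1}(\log(n+N))^\theta$ with $\int_N^{R+N+1}t^{D-1}(\log t)^\theta\,\dd t$, using eventual monotonicity of the summand and bounded oscillation on unit intervals. It evaluates this integral by l'H\^{o}pital's rule as $\sim\frac1D T^D(\log T)^\theta$, and that is where $D>0$ enters. You stay entirely discrete. For the lower bound you truncate to $n\in[R/2,R]$, which does not even need $D>0$. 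For the upper bound you use dyadic blocks and geometric summation. The paper's route is shorter once the integral asymptotic is granted, and it gives the sharper $\sum_{n\le R}\omega_n\sim\frac1D R^D(\log R)^\theta$. Yours avoids calculus and treats all signs of $D-1$ and $\theta$ uniformly, so your separate monotone case $D\ge1$, $\theta\ge0$ is unnecessary. It also makes the roles of $D>0$ and $N\ge3$ transparent, and it yields only two-sided comparability, which is all the lemma claims.

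One justification should be tightened. The phrase ``the logarithmic factor changes by at most a bounded factor between consecutive blocks'' is not enough by itself when $\theta<0$. A per-step factor $K$ with $K^{|\theta|}\ge 2^D$ would cancel the geometric growth. What you actually need is that this factor tends to $1$, or more directly a comparison with the last term. Put $a_j=2^{jD}(\log(2^j+N))^\theta$ and $J=\lfloor\log_2 R\rfloor$. Since $2^J+N\le 2^{J-j}(2^j+N)$, you get
\[
\frac{\log(2^J+N)}{\log(2^j+N)}\le 1+\frac{(J-j)\log 2}{\log 3},
\qquad\text{hence}\qquad
\frac{a_j}{a_J}\le 2^{-(J-j)D}\Bigl(1+\frac{(J-j)\log 2}{\log 3}\Bigr)^{|\theta|}
\]
for $\theta<0$, and simply $a_j/a_J\le 2^{-(J-j)D}$ for $\theta\ge0$. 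Either bound is summable in $J-j$, so $\sum_{j\le J}a_j\le C a_J\asymp R^D(\log R)^\theta$, and your upper bound follows.
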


\begin{proof}
	Since $B(o,R)=\{0,1,\dots,R\}$ for integer $R\ge0$,
	\[
	\mu(B(o,R))
	=\omega_0+\sum_{n=1}^{R}(\omega_{n-1}+\omega_n)
	\asymp
	\sum_{n=0}^{R}(n+N)^{D-1}(\log(n+N))^\theta.
	\]
	The summand is eventually monotone, and its variation on every interval
	$[n+N,n+N+1]$ is bounded by a fixed factor.  Hence the last sum is
	comparable, up to a finite initial contribution, to
	\[
	\int_N^{R+N+1}t^{D-1}(\log t)^\theta\,\dd t.
	\]
	Since $D>0$, l'H\^{o}pital's rule gives
	\[
	\int_N^T t^{D-1}(\log t)^\theta\,\dd t
	\sim \frac1D T^D(\log T)^\theta
	\qquad(T\to\infty).
	\]
	The conclusion follows because $R+N+1\asymp R$ for large $R$.
\end{proof}

We also need a quantitative lower estimate for power--logarithmic profiles.

\begin{lemma}\label{lem:line-radial}
	Let $D>2$, $\theta,\delta\in\mathbb R$, and $\gamma>0$.  For all
	sufficiently large integers $N$, set
	\[
	\omega_n=(n+N)^{D-1}\bigl(\log(n+N)\bigr)^\theta,
	\qquad
	F(n)=(n+N)^{-\gamma}\bigl(\log(n+N)\bigr)^{-\delta}.
	\]
	Then the following estimates hold with a constant $c>0$ independent of
	$n$.
	\begin{enumerate}[label=\textup{(\roman*)},leftmargin=2em]
		\item If $0<\gamma<D-2$, then
		\[
		LF(n)\ge
		c\,\mu(n)(n+N)^{-\gamma-2}
		\bigl(\log(n+N)\bigr)^{-\delta}
		\qquad(n\ge0).
		\]
		\item If $\gamma=D-2$ and $\delta<\theta$, then
		\[
		LF(n)\ge
		c\,\mu(n)(n+N)^{-D}
		\bigl(\log(n+N)\bigr)^{-\delta-1}
		\qquad(n\ge0).
		\]
	\end{enumerate}
	Here the threshold for $N$ and the constant $c$ may depend on
	$D,\theta,\gamma$, and $\delta$.
\end{lemma}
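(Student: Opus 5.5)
The plan is to compute $LF(n)$ exactly as a difference of fluxes and reduce everything to an asymptotic expansion in $t=n+N$. For $n\ge1$,
\[
LF(n)=\omega_{n-1}\bigl(F(n)-F(n-1)\bigr)-\omega_n\bigl(F(n+1)-F(n)\bigr)=J(n-1)-J(n),
\]
where $J(k)=\omega_k\bigl(F(k)-F(k+1)\bigr)>0$ is the outward flux across the edge $\{k,k+1\}$. At $n=0$ we have $LF(0)=-J(0)$ plus the boundary term $\omega_0F(0)$, with sign conventions handled separately. Writing $f(t)=t^{-\gamma}(\log t)^{-\delta}$ and $w(t)=t^{D-1}(\log t)^\theta$, the continuum analogue is $-(wf')'$. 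So the first step is to compute
\[
-w f'(t)=t^{D-2-\gamma}(\log t)^{\theta-\delta}\bigl(\gamma+\delta/\log t\bigr),
\]
and then
\[
-(wf')'(t)=-\frac{d}{dt}\Bigl[t^{D-2-\gamma}(\log t)^{\theta-\delta}\bigl(\gamma+\delta(\log t)^{-1}\bigr)\Bigr].
\]
In case (i), the leading term is $-(D-2-\gamma)\gamma\,t^{D-3-\gamma}(\log t)^{\theta-\delta}$ with a negative sign, which looks wrong. Here I recall the sign convention: $LF(n)=J(n)-J(n-1)$ in the correct orientation, since $L F(n)=\omega_n(F(n)-F(n+1))+\omega_{n-1}(F(n)-F(n-1))=J(n)-J(n-1)$. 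So $LF\approx (wf')'\cdot(-1)\cdot(-1)=\frac{d}{dt}(-wf')$, which is positive precisely because $-wf'$ is increasing. In case (i) this gives the leading term $(D-2-\gamma)\gamma\,t^{D-3-\gamma}(\log t)^{\theta-\delta}$. In case (ii) the power cancels, and the derivative of $(\log t)^{\theta-\delta}(\gamma+\delta/\log t)$ gives the leading term $\gamma(\theta-\delta)\,t^{-1}(\log t)^{\theta-\delta-1}>0$. Since $\mu(n)\asymp t^{D-1}(\log t)^\theta$, these match the claimed right-hand sides $\mu(n)t^{-\gamma-2}(\log t)^{-\delta}$ and $\mu(n)t^{-D}(\log t)^{-\delta-1}$.

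The second step is to make the discretization rigorous. I would write
\[
J(k)=w(k+N)\int_{k+N}^{k+N+1}\bigl(-f'(\tau)\bigr)\,d\tau
\]
and compare $J(k)$ with $\Phi(k+N+\tfrac12)$, where $\Phi=-wf'$. By Taylor expansion, $J(k)=\Phi(k+N+\tfrac12)+O\bigl(t^{D-4-\gamma}(\log t)^{\theta-\delta}\bigr)$, because each derivative gains a factor $t^{-1}$ up to logarithms. Then
\[
J(n)-J(n-1)=\Phi'(\xi)+O\bigl(t^{D-4-\gamma}(\log t)^{\theta-\delta}\bigr)
\]
for some $\xi$ in the relevant interval, and $\Phi'$ varies by a bounded factor there. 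The error is smaller than the main term by a factor $t^{-1}$ (up to logarithms) in case (i), and by $t^{-1}\log t$ in case (ii). Hence for $N$ large enough both the error and the lower-order $\delta/\log t$ corrections are absorbed for all $n\ge1$ uniformly. This is the reason for allowing the threshold on $N$.

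The third step handles the root. At $n=0$, $LF(0)=\omega_0\bigl(F(0)-F(1)\bigr)=J(0)>0$, with no incoming flux. Its size is $J(0)\asymp N^{D-2-\gamma}(\log N)^{\theta-\delta}$, which exceeds the required $c\,\mu(0)N^{-\gamma-2}(\log N)^{-\delta}\asymp N^{D-3-\gamma}(\log N)^{\theta-\delta}$. In case (ii) the requirement is $N^{-1}(\log N)^{\theta-\delta-1}$, which is smaller still, so the root is easy.

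The main obstacle is the uniform error control in case (ii). There the main term involves one fewer power of $\log$ than a naive term, so one must check that the second-order Taylor remainder, which is $O\bigl(t^{-2}(\log t)^{\theta-\delta}\bigr)$, is dominated by $t^{-1}(\log t)^{\theta-\delta-1}$. It is, since $t^{-1}\log t\to0$. I expect a careful bookkeeping of derivatives of $t^a(\log t)^b$, using that they are $O\bigl(t^{a-k}(\log t)^b\bigr)$ for large $N$, to be enough.
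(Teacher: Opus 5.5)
Your overall route matches the paper's. You write $LF(n)=J(n)-J(n-1)$ (after your sign correction), compare with $-(wf')'$, and treat $n=0$ separately. Your leading terms are correct: $\gamma(D-2-\gamma)t^{D-3-\gamma}(\log t)^{\theta-\delta}$ in case (i) and $\gamma(\theta-\delta)t^{-1}(\log t)^{\theta-\delta-1}$ in case (ii). So is your root estimate.

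The gap is the discretization step: the claim $J(k)=\Phi(k+N+\tfrac12)+O\bigl(t^{D-4-\gamma}(\log t)^{\theta-\delta}\bigr)$ is false. The midpoint rule does give $f(t)-f(t+1)=-f'(t+\tfrac12)+O(|f'''|)$. But $\omega_k=w(t)$, with $t=k+N$, is the weight at the left endpoint of $[t,t+1]$, not at its midpoint. Since $w(t)=w(t+\tfrac12)-\tfrac12w'(t+\tfrac12)+O(|w''|)$, you get
\[
J(k)-\Phi\bigl(t+\tfrac12\bigr)=\tfrac12\,(w'f')\bigl(t+\tfrac12\bigr)+O\bigl(t^{D-4-\gamma}(\log t)^{\theta-\delta}\bigr)
=-\tfrac{\gamma(D-1)}{2}\,t^{D-3-\gamma}(\log t)^{\theta-\delta}\bigl(1+o(1)\bigr).
\]
This is already visible with $w=t^{D-1}$ and $f=t^{-\gamma}$. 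The pointwise error is of the same order as the main term of $LF(n)$ in case (i), so it cannot be absorbed by enlarging $N$. In case (ii), the case you flagged as delicate, it is larger than the main term by a factor $\log t$. So your second step, which bounds the errors of $J(n)$ and $J(n-1)$ separately and then subtracts, cannot close.

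What is missing is that this first-order error is itself a smooth function of $t$, so it must be differenced rather than bounded. The cleanest repair is to note that $J(k)=G(k+N)$ exactly, where $G(t)=w(t)\bigl(f(t)-f(t+1)\bigr)$. Then $LF(n)=G(t)-G(t-1)=G'(\xi)$ for some $\xi\in(t-1,t)$. Taylor's formula gives
\[
G'=-(wf')'+O\bigl(|w'||f''|+|w||f'''|\bigr)=-(wf')'+O\bigl(t^{D-4-\gamma}(\log t)^{\theta-\delta}\bigr).
\]
Equivalently, as the paper does, expand $LF(n)=A(r)\bigl(f(r)-f(r+1)\bigr)+A(r-1)\bigl(f(r)-f(r-1)\bigr)$ around $r=n+N$. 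The first-order terms cancel exactly, leaving $LF(n)=-(Af')'(r)+E(r)$ with
\[
|E|\lesssim A|f'''|+|A'||f''|+|A''||f'|\lesssim r^{D-\gamma-4}(\log r)^{\theta-\delta}.
\]
With this replacement the rest of your argument goes through: absorption of the lower-order terms for large $N$, $\mu(n)\asymp w(t)$, and the root estimate.
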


\begin{proof}
	Put
	\[
	r=n+N,
	\qquad
	\ell=\log r,
	\qquad
	A(r)=r^{D-1}\ell^\theta,
	\qquad
	f(r)=r^{-\gamma}\ell^{-\delta}.
	\]
	For $n\ge1$, the half-line formula gives
	\[
	LF(n)=A(r)\bigl(f(r)-f(r+1)\bigr)
	+A(r-1)\bigl(f(r)-f(r-1)\bigr).
	\]
	For fixed real exponents $\eta,\kappa$ and $j\le3$, differentiation gives
	\[
	\left|\frac{\dd^j}{\dd r^j}
	\bigl(r^\eta(\log r)^\kappa\bigr)\right|
	\le C_{\eta,\kappa,j}
	r^{\eta-j}(\log r)^\kappa
	\]
	for all sufficiently large $r$.  Taylor's formula with remainder, applied
	at $r$, therefore yields for every $n\ge1$
	\begin{equation}\label{eq:line-discrete-expansion}
	LF(n)=-(Af')'(r)+E(r),
	\qquad
	|E(r)|\le C r^{D-\gamma-4}\ell^{\theta-\delta},
	\end{equation}
	uniformly for large $r$.  Indeed, the error terms are bounded by constant
	multiples of $A|f'''|$, $|A'||f''|$, and $|A''||f'|$.

	Set
	\[
	m=D-\gamma-2,
	\qquad
	a_0=\theta-\delta.
	\]
	Since
	\[
	Af'(r)=-r^m\bigl(\gamma\ell^{a_0}
	+\delta\ell^{a_0-1}\bigr),
	\]
	we have the exact identity
	\begin{equation}\label{eq:line-continuous-main}
	\begin{aligned}
	-(Af')'(r)
	=r^{m-1}\bigl[&\gamma m\ell^{a_0}
	+(\delta m+\gamma a_0)\ell^{a_0-1}\\
	&+\delta(a_0-1)\ell^{a_0-2}\bigr].
	\end{aligned}
	\end{equation}

	Suppose first that $0<\gamma<D-2$, so $m>0$.  After increasing the lower
	threshold for $r$, the two lower logarithmic powers in
	\eqref{eq:line-continuous-main} have absolute value at most
	$\frac14\gamma m\ell^{a_0}$, and hence
	\[
	-(Af')'(r)\ge
	\frac34\gamma m r^{m-1}\ell^{a_0}.
	\]
	The error in \eqref{eq:line-discrete-expansion} is smaller by a factor
	$O(r^{-1})$ and can be bounded by
	$\frac14\gamma m r^{m-1}\ell^{a_0}$.  Thus
	\[
	LF(n)\ge
	\frac12\gamma m r^{D-\gamma-3}\ell^{\theta-\delta}
	\qquad(n\ge1).
	\]

	Now suppose that $\gamma=D-2$ and $\delta<\theta$.  Then $m=0$ and
	$a_0=\theta-\delta>0$.  Formula
	\eqref{eq:line-continuous-main} becomes
	\[
	-(Af')'(r)
	=r^{-1}\bigl[
	\gamma a_0\ell^{a_0-1}
	+\delta(a_0-1)\ell^{a_0-2}
	\bigr].
	\]
	For all sufficiently large $r$, the second term has absolute value at most
	$\frac14\gamma a_0\ell^{a_0-1}$, while the error in
	\eqref{eq:line-discrete-expansion} is bounded by
	$C r^{-2}\ell^{a_0}$ and hence, after increasing the threshold once more,
	by $\frac14\gamma a_0 r^{-1}\ell^{a_0-1}$.  Consequently,
	\[
	LF(n)\ge
	\frac12\gamma a_0 r^{-1}\ell^{a_0-1}
	\qquad(n\ge1).
	\]

	For large $N$ one has, uniformly for $n\ge1$,
	\[
	\mu(n)=A(r-1)+A(r)\asymp A(r)
	=r^{D-1}\ell^\theta.
	\]
	The preceding two estimates therefore give the asserted right-hand sides
	for all $n\ge1$.

	It remains to treat $n=0$.  By the mean value theorem, for some
	$\xi\in(N,N+1)$,
	\[
	F(0)-F(1)=-f'(\xi)
	=\xi^{-\gamma-1}(\log\xi)^{-\delta}
	\left(\gamma+\frac{\delta}{\log\xi}\right).
	\]
	After increasing $N$, the last factor is at least $\gamma/2$, and the
	remaining factors are comparable to
	$N^{-\gamma-1}(\log N)^{-\delta}$.  Since $\mu(0)=\omega_0$, this yields
	\[
	LF(0)\ge
	c\,\mu(0)N^{-\gamma-1}(\log N)^{-\delta}.
	\]
	This is stronger than the required estimate in (i).  When
	$\gamma=D-2$, it is also stronger than the required estimate in (ii),
	because $N\log N\ge1$.  The proof is complete.
\end{proof}

\begin{proof}[Proof of Theorem \ref{thm:weighted-line-sharpness}]
	Define the polynomial decay exponents
	\[
	\alpha=\frac{2(p+1)}{pq-1},
	\qquad
	\beta=\frac{2(q+1)}{pq-1}.
	\]
	They satisfy
	\[
	q\alpha=\beta+2,
	\qquad
	p\beta=\alpha+2.
	\]
	Also set
	\[
	\theta=\frac1{pq-1}+\eps.
	\]

	Assume first that $q>p$.  Then $\beta>\alpha$ and
	$D_{\rm sys}=2+\beta$.  Choose the logarithmic exponents explicitly by
	\[
	b=\frac1{pq-1}+\frac\eps2,
	\qquad
	a=\frac12\left(\frac{b+1}{q}+pb\right).
	\]
	Since $b(pq-1)>1$, the interval
	$((b+1)/q,pb)$ is nonempty, and therefore
	\[
	\frac1{pq-1}<b<\theta,
	\qquad
	qa>b+1,
	\qquad
	a<pb.
	\]
	Apply Lemma \ref{lem:line-radial} with $D=D_{\rm sys}$, and choose
	$N$ sufficiently large that it applies simultaneously to the parameter
	pairs $(\gamma,\delta)=(\alpha,a)$ and $(\beta,b)$.  Set
	\[
	\omega_n=(n+N)^{D_{\rm sys}-1}(\log(n+N))^\theta
	\]
	and define
	\[
	U(n)=(n+N)^{-\alpha}(\log(n+N))^{-a},
	\qquad
	V(n)=(n+N)^{-\beta}(\log(n+N))^{-b}.
	\]
	Because $\alpha<D_{\rm sys}-2=\beta$ and $b<\theta$, the two parts of Lemma
	\ref{lem:line-radial} give constants $c_U,c_V>0$ such that, for every
	$n\ge0$,
	\[
	\begin{aligned}
	LU(n)
	&\ge c_U\mu(n)(n+N)^{-\alpha-2}(\log(n+N))^{-a}
	\ge c_U\mu(n)V(n)^p,\\
	LV(n)
	&\ge c_V\mu(n)(n+N)^{-\beta-2}(\log(n+N))^{-b-1}
	\ge c_V\mu(n)U(n)^q.
	\end{aligned}
	\]
	Here the last inequalities use $p\beta=\alpha+2$, $a<pb$,
	$q\alpha=\beta+2$, and $qa>b+1$.

	Now assume that $p>q$.  Then $\alpha>\beta$ and
	$D_{\rm sys}=2+\alpha$.  Choose
	\[
	a=\frac1{pq-1}+\frac\eps2,
	\qquad
	b=\frac12\left(\frac{a+1}{p}+qa\right).
	\]
	Since $a(pq-1)>1$, the interval $((a+1)/p,qa)$ is nonempty, and hence
	\[
	\frac1{pq-1}<a<\theta,
	\qquad
	pb>a+1,
	\qquad
	b<qa.
	\]
	Apply Lemma \ref{lem:line-radial} with $D=D_{\rm sys}$, choose $N$
	sufficiently large for both profiles, and use the same conductances and
	profiles as above.  This time
	$\beta<D_{\rm sys}-2=\alpha$ and $a<\theta$, so Lemma
	\ref{lem:line-radial} gives constants $c_U,c_V>0$ such that, for every
	$n\ge0$,
	\[
	\begin{aligned}
	LU(n)
	&\ge c_U\mu(n)(n+N)^{-\alpha-2}(\log(n+N))^{-a-1}
	\ge c_U\mu(n)V(n)^p,\\
	LV(n)
	&\ge c_V\mu(n)(n+N)^{-\beta-2}(\log(n+N))^{-b}
	\ge c_V\mu(n)U(n)^q.
	\end{aligned}
	\]
	The last inequalities now use $p\beta=\alpha+2$, $pb>a+1$,
	$q\alpha=\beta+2$, and $b<qa$.

	In either case, choose
	\[
	0<A\le (c_Uc_V^p)^{1/(pq-1)},
	\qquad
	B=\frac{A^q}{c_V},
	\]
	and set $u=AU$, $v=BV$.  Then
	\[
	A^q=c_VB,
	\qquad
	B^p=\frac{A^{pq}}{c_V^p}\le c_UA.
	\]
	Consequently, at every vertex, including $n=0$,
	\[
	Lu=A LU\ge B^p\mu V^p=\mu v^p,
	\qquad
	Lv=B LV\ge A^q\mu U^q=\mu u^q.
	\]
	Thus $(u,v)$ is a positive solution of
	\eqref{eq:system-normalized}.  Finally, Lemma \ref{lem:line-volume} gives
	\[
	\mu(B(o,R))\asymp
	R^{D_{\rm sys}}(\log R)^{1/(pq-1)+\eps}.
	\]
\end{proof}

For the graph in Theorem \ref{thm:weighted-line-sharpness}, the series in
Theorem \ref{thm:sharp-asymmetric} converges.  Indeed,
\[
D_{\rm sys}(pq-1)
=2(pq-1)+2(P+1)
=2pq+2P,
\]
and therefore
\[
\frac{R^{2pq+2P-1}}{\mu(B_R)^{pq-1}}
\asymp
\frac1{R(\log R)^{\theta(pq-1)}}.
\]
Here $\theta=1/(pq-1)+\eps$, so
$\theta(pq-1)=1+\eps(pq-1)>1$ and the last series is summable.  Thus, for
every $\eps>0$, a positive solution exists at logarithmic volume exponent
$1/(pq-1)+\eps$.  This proves that $1/(pq-1)$ is the optimal universal
nonexistence endpoint within the critical polynomial--logarithmic weighted
half-line family.  It does not assert that convergence of
\eqref{eq:sharp-series} implies existence on every weighted graph.


\medskip

\begin{thebibliography}{99}
	
	\bibitem{BermanKonsowa90}
	K. A. Berman and M. H. Konsowa,
	\emph{Random paths and cuts, electrical networks, and reversible Markov chains},
	SIAM J. Discrete Math. \textbf{3} (1990), 311--319.
	
	
	
	
	\bibitem{BuscaManasevich02}
	J. Busca and R. Man\'asevich,
	\emph{A Liouville-type theorem for Lane--Emden systems},
	Indiana Univ. Math. J. \textbf{51} (2002), 37--51.
	
	
	\bibitem{DuongMinh25}
	A. T. Duong and N. C. Minh,
	\emph{Liouville-type theorems for a system of elliptic inequalities on weighted graphs},
	Z. Anal. Anwend. \textbf{44} (2025), no.~3/4, 307--321.
	
	\bibitem{DLS26}
	A. T. Duong, Y. Liu, N. C. Minh, D. T. Quyet, and Y. Sun,
	\emph{Liouville type results for quasilinear elliptic inequalities involving gradient terms on weighted graphs},
	preprint, arXiv:2604.21145 [math.AP], 2026.
	
	\bibitem{FFbook}
	L. R. Ford, Jr., and D. R. Fulkerson,
	\emph{Flows in Networks}, Princeton Landmarks in Mathematics,
	Princeton University Press, Princeton, NJ, 2010.
	
	\bibitem{GW25}
	Y. Ge and L. Wang,
	\emph{$p$-Laplace elliptic inequalities on the graph},
	Commun. Pure Appl. Anal. \textbf{24} (2025), no.~3, 389--411.
	
	\bibitem{GidasSpruck81}
	B. Gidas and J. Spruck,
	\emph{Global and local behavior of positive solutions of nonlinear elliptic equations},
	Comm. Pure Appl. Math. \textbf{34} (1981), 525--598.
	
	
	
	\bibitem{GrigorYannParabolic}
	A. Grigor'yan,
	\emph{Analytic and geometric background of recurrence and non-explosion of Brownian motion on Riemannian manifolds},
	Bull. Amer. Math. Soc. \textbf{36} (1999), 135--249.
	
	\bibitem{GrigorYannStoch}
	A. Grigor'yan,
	\emph{Heat kernel and analysis on manifolds},
	AMS/IP Studies in Advanced Mathematics, vol.~47,
	American Mathematical Society, Providence, RI, 2009.
	
	\bibitem{AGbook}
	A. Grigor'yan,
	\emph{Introduction to Analysis on Graphs}, University Lecture Series, vol.~71,
	American Mathematical Society, Providence, RI, 2018.
	
	
	\bibitem{GS14}
	A. Grigor'yan and Y. Sun,
	\emph{On nonnegative solutions of the inequality $\Delta u+u^{\sigma}\le 0$ on Riemannian manifolds},
	Comm. Pure Appl. Math. \textbf{67} (2014), 1336--1352.
	
	\bibitem{GSV20}
	A. Grigor'yan, Y. Sun, and I. E. Verbitsky,
	\emph{Superlinear elliptic inequalities on manifolds},
	J. Funct. Anal. \textbf{278} (2020), no.~9, Paper No.~108444, 34 pp.
	
	\bibitem{GHS23}
	Q. Gu, X. Huang, and Y. Sun,
	\emph{Semi-linear elliptic inequalities on weighted graphs},
	Calc. Var. Partial Differential Equations \textbf{62} (2023), no.~2,
	Paper No.~42, 14 pp.
	
	\bibitem{GHHS26}
	Q. Gu, L. Hao, X. Huang, and Y. Sun,
	\emph{Flow decomposition, Green testing, and Lane--Emden inequalities on weighted graphs},
	preprint, arXiv:2604.24932 [math.AP], 2026.
	
	\bibitem{HS23}
	L. Hao and Y. Sun,
	\emph{Sharp Liouville type results for semilinear elliptic inequalities involving gradient terms on weighted graphs},
	Discrete Contin. Dyn. Syst. Ser. S \textbf{16} (2023), no.~6, 1484--1516.
	
	\bibitem{LyonsPeres17}
	R. Lyons and Y. Peres,
	\emph{Probability on Trees and Networks},
	Cambridge University Press, Cambridge, 2017.
	
	\bibitem{MinhDuongNguyen24}
	N. C. Minh, A. T. Duong, and N. H. Nguyen,
	\emph{Liouville type theorem for a system of elliptic inequalities on weighted graphs without $(p_0)$-condition},
	Math. Slovaca \textbf{74} (2024), no.~5, 1255--1266.
	
	
	\bibitem{MinhQuyetDuong25}
	N. C. Minh, D. T. Quyet, and A. T. Duong,
	\emph{Liouville-type theorems for systems of elliptic inequalities involving $p$-Laplace operator on weighted graphs},
	Commun. Pure Appl. Anal. \textbf{24} (2025), no.~4, 641--660.
	
	
	\bibitem{Mitidieri96}
	E. Mitidieri,
	\emph{Nonexistence of positive solutions of semilinear elliptic systems in $\mathbb R^N$},
	Differential Integral Equations \textbf{9} (1996), 465--479.
	
	
	\bibitem{MitidieriPokhozhaev01}
	E. Mitidieri and S. I. Pokhozhaev,
	\emph{A priori estimates and blow-up of solutions to nonlinear partial differential equations and inequalities},
	Proc. Steklov Inst. Math. \textbf{234} (2001), 1--362.
	
	
	\bibitem{NashWilliams59}
	C. St. J. A. Nash-Williams,
	\emph{Random walk and electric currents in networks},
	Proc. Cambridge Philos. Soc. \textbf{55} (1959), 181--194.
	
	
	
	\bibitem{PQS07}
	P. Pol\'a\v{c}ik, P. Quittner, and P. Souplet,
	\emph{Singularity and decay estimates in superlinear problems via Liouville-type theorems},
	Duke Math. J. \textbf{139} (2007), 555--579.
	
	
	\bibitem{SC95}
	L. Saloff-Coste,
	\emph{Inequalities for $p$-superharmonic functions on networks},
	Rend. Sem. Mat. Fis. Milano \textbf{65} (1995), 139--158.
	
	\bibitem{SC97}
	L. Saloff-Coste,
	\emph{Some inequalities for superharmonic functions on graphs},
	Potential Anal. \textbf{6} (1997), no.~2, 163--181.
	
	
	
	\bibitem{SerrinZou96}
	J. Serrin and H. Zou,
	\emph{Non-existence of positive solutions of Lane--Emden systems},
	Differential Integral Equations \textbf{9} (1996), 635--653.
	
	\bibitem{Soardi94}
	P. M. Soardi,
	\emph{Potential Theory on Infinite Networks},
	Lecture Notes in Mathematics, vol.~1590, Springer-Verlag, Berlin, 1994.
	
	
	\bibitem{Souplet09}
	P. Souplet,
	\emph{The proof of the Lane--Emden conjecture in four space dimensions},
	Adv. Math. \textbf{221} (2009), 1409--1427.
	
	\bibitem{Sun16}
	Y. Sun,
	\emph{Uniqueness result on nonnegative solutions of a large class of differential inequalities on Riemannian manifolds},
	Pacific J. Math. \textbf{280} (2016), no.~1, 241--254.
	
	\bibitem{WWbook}
	W. Woess,
	\emph{Random Walks on Infinite Graphs and Groups}, Cambridge Tracts in Mathematics, vol.~138,
	Cambridge University Press, Cambridge, 2000.
	
	
	
	\bibitem{XuWangSun18}
	F. Xu, L. Wang, and Y. Sun,
	\emph{Liouville type theorems for systems of elliptic differential inequalities on Riemannian manifolds},
	J. Math. Anal. Appl. \textbf{466} (2018), 426--446.
\end{thebibliography}
\end{document}